\newtheorem{theorem}{Theorem}[section]
\newtheorem{lemma}[theorem]{Lemma}
\newtheorem{proposition}[theorem]{Proposition}
\theoremstyle{definition}
\newtheorem{definition}[theorem]{Definition}
\newtheorem{example}[theorem]{Example}
\newtheorem{remark}[theorem]{Remark}
\newcommand{\e}{\varepsilon}
\renewcommand{\labelenumi}{(\roman{enumi})}
\begin{document}
\title[CF-moves for virtual links]{CF-moves for virtual links}

\author[Kodai Wada]{Kodai Wada}
\address{Department of Mathematics, Graduate School of Science, Osaka University, 1-1 Machikaneyama-cho, Toyonaka, Osaka 560-0043, Japan}
\email{ko-wada@cr.math.sci.osaka-u.ac.jp}

\subjclass[2020]{Primary 57K12; Secondary 57K10}

\keywords{virtual link, crossing change, forbidden move, CF-move, linking number, Gauss diagram}

\thanks{This work was supported by JSPS KAKENHI Grant Number JP19J00006.}



\begin{abstract}
Oikawa defined an unknotting operation on virtual knots, called a CF-move, and gave a classification of 2-component virtual links up to CF-moves by the virtual linking number and his $n$-invariant. 
In particular, it was proved that a CF-move characterizes the information contained in the virtual linking number for 2-component odd virtual links. 
In this paper, we extend this result by classifying odd virtual links and almost odd virtual links with arbitrary number of components up to CF-moves, using the virtual linking number. 
Moreover, we extend Oikawa's $n$-invariant and introduce two invariants for 3-component even virtual links. 
Using these invariants together with the virtual linking number, we classify 3-component even virtual links up to CF-moves. 
As a result, a classification of 3-component virtual links up to CF-moves is provided. 
\end{abstract}

\maketitle

\section{Introduction}
Virtual links were introduced by Kauffman in~\cite{Kau} as a generalization of classical links in the 3-sphere. 
For an integer $\mu\geq1$, a \emph{$\mu$-component virtual link diagram} is the image of an immersion of $\mu$ circles into the plane, whose singularities are only transverse double points. 
Such double points are divided into \emph{classical crossings} and \emph{virtual crossings} as shown in Figure~\ref{xing}. 
A \emph{$\mu$-component virtual link} is an equivalence class of $\mu$-component virtual link diagrams under \emph{generalized Reidemeister moves}, which consist of classical Reidemeister moves R1--R3 and virtual Reidemeister moves V1--V4 as shown in Figure~\ref{GRmoves}. 
A 1-component virtual link is also called a \emph{virtual knot}. 
Throughout this paper, all virtual links are assumed to be ordered and oriented. 

\begin{figure}[htb]
\centering
  \begin{overpic}[width=4.5cm]{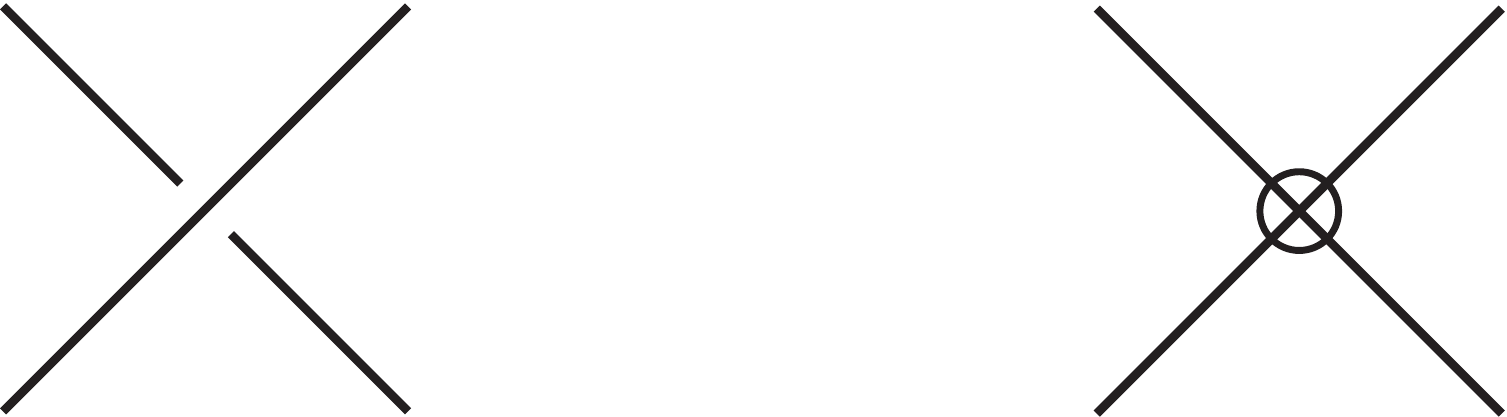}
    \put(-19,-15){classical crossing}
    \put(78,-15){virtual crossing}
  \end{overpic}
\vspace{1em}
\caption{Two types of double points on virtual link diagrams}
\label{xing}
\end{figure}

\begin{figure}[htb]
\centering
  \begin{overpic}[width=12cm]{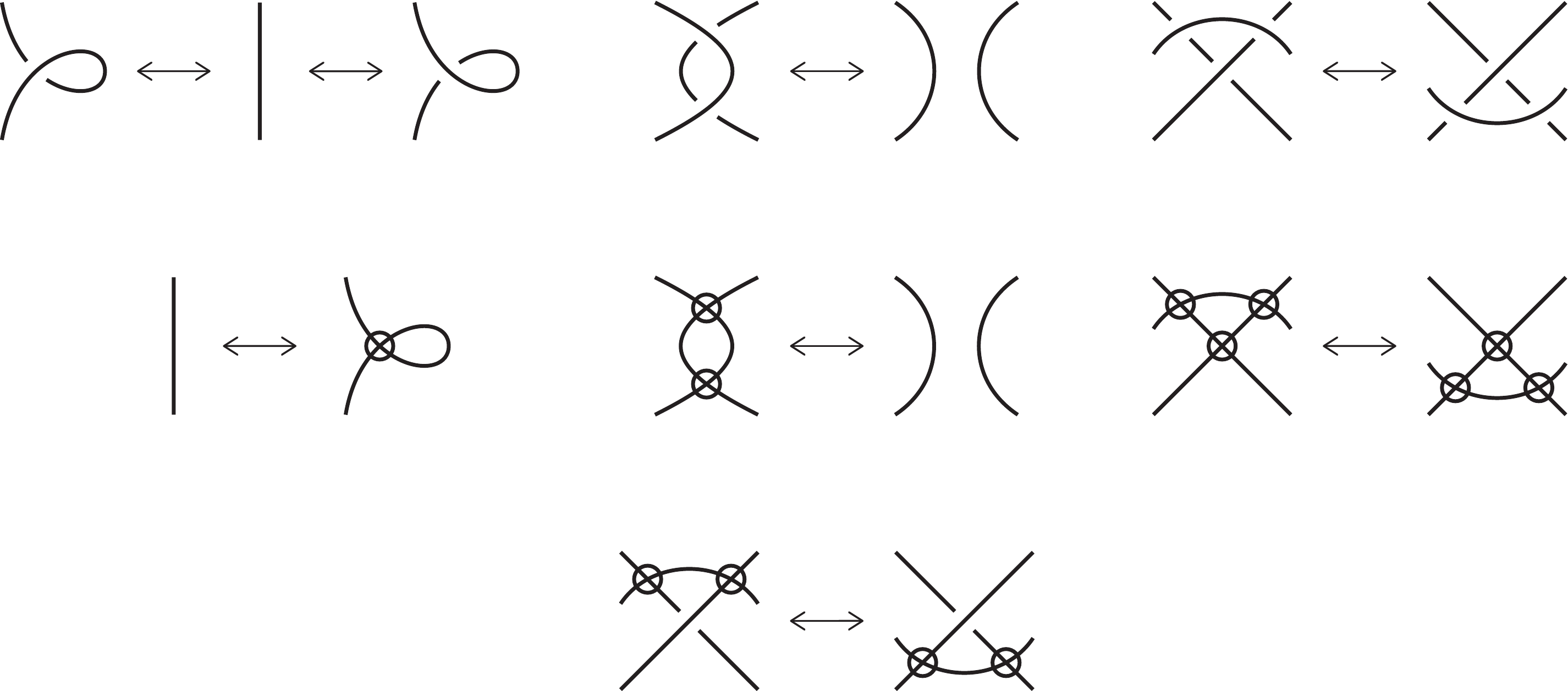}
    \put(32,140.5){R1}
    \put(69.5,140.5){R1}
    \put(174,140.5){R2}
    \put(290,140.5){R3}
    \put(50.5,80.5){V1}
    \put(174,80.5){V2} 
    \put(290,80.5){V3}
    \put(174,20.5){V4}
  \end{overpic}
\caption{Generalized Reidemeister moves}
\label{GRmoves}
\end{figure}

In virtual knot and link theory, we do not allow to use two local moves as shown in Figure~\ref{forbidden}. 
These moves are called the \emph{forbidden moves}. 
Any virtual knot can be deformed into the trivial knot by forbidden moves (cf.~\cite{GPV,Kan,Nel}). 
Generally, the $\mu$-component virtual links $L=K_{1}\cup \cdots\cup K_{\mu}$ up to forbidden moves are classified by the $(i,j)$-linking numbers $\mbox{Lk}(K_{i},K_{j})\in\mathbb{Z}$ $(1\leq i\neq j\leq \mu)$ completely (cf.~\cite{ABMW,Nas,Oka}). 

\begin{figure}[htb]
\centering
  \begin{overpic}[width=9cm]{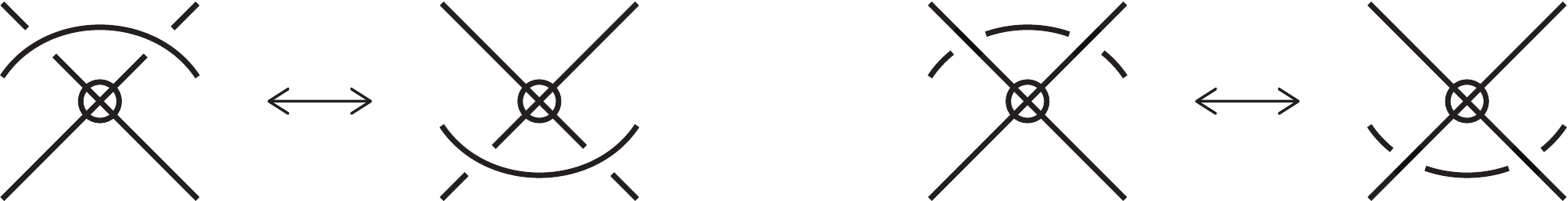}
  \end{overpic}
\caption{Forbidden moves}
\label{forbidden}
\end{figure}

A \emph{CF-move} introduced by Oikawa in~\cite{Oik} is a local move on virtual link diagrams as shown in Figure~\ref{CF}. 
It is considered as a combination of crossing changes and a forbidden move. 
Two virtual links are \emph{CF-equivalent} if their diagrams are related by a finite sequence of generalized Reidemeister moves and CF-moves. 
In~\cite[Theorem~1.2]{Oik}, Oikawa proved that any virtual knot is CF-equivalent to the trivial knot, i.e. a CF-move is an unknotting operation. 
Moreover, he defined an invariant $n(L)\in\mathbb{N}\cup\{0\}$ of a 2-component virtual link $L=K_{1}\cup K_{2}$ with $\mathrm{Lk}(K_{1},K_{2})\equiv\mathrm{Lk}(K_{2},K_{1})\pmod{2}$. 
Then using the virtual linking number $\mathrm{vlk}(K_{1},K_{2})=-\frac{1}{2}(\mathrm{Lk}(K_{1},K_{2})-\mathrm{Lk}(K_{2},K_{1}))$
and $n(L)$, a classification of 2-component virtual links up to CF-equivalence was given in~\cite[Theorem~1.7]{Oik} as follows. 

\begin{figure}[htb]
\centering
  \begin{overpic}[width=4cm]{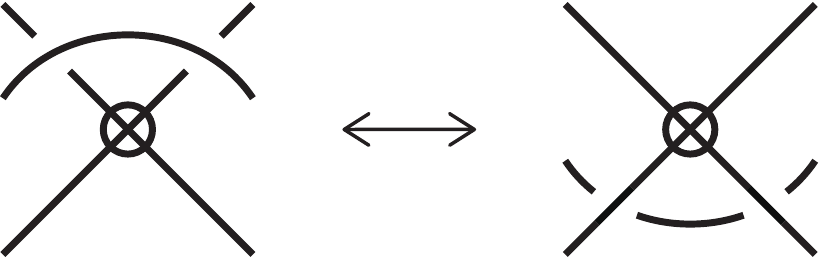}
  \end{overpic}
\caption{A CF-move}
\label{CF}
\end{figure}

\begin{theorem}[{\cite{Oik}}]\label{th-Oikawa}
Let $L=K_{1}\cup K_{2}$ and $L'=K'_{1}\cup K'_{2}$ be $2$-component virtual links. 
\begin{enumerate}
\item Assume that $\mathrm{vlk}(K_{1},K_{2})$ is not an integer. 
Then $L$ and $L'$ are CF-equivalent if and only if $\mathrm{vlk}(K_{1},K_{2})=\mathrm{vlk}(K'_{1},K'_{2})$. 
\item Assume that $\mathrm{vlk}(K_{1},K_{2})$ is an integer. 
Then $L$ and $L'$ are CF-equivalent if and only if $\mathrm{vlk}(K_{1},K_{2})=\mathrm{vlk}(K'_{1},K'_{2})$ 
and $n(L)=n(L')$. 
\end{enumerate}
\end{theorem}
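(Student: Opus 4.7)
The plan is to prove each case as a pair: invariance (necessity) and realization (sufficiency). For invariance, I would first verify that $\mathrm{vlk}(K_{1},K_{2})$ is preserved under a CF-move by a direct sign count on the two sides of Figure~\ref{CF}: list the classical crossings on each side, tally their signed contributions to $\mathrm{Lk}(K_{1},K_{2})$ and to $\mathrm{Lk}(K_{2},K_{1})$, and check that the antisymmetric combination $\mathrm{Lk}(K_{1},K_{2})-\mathrm{Lk}(K_{2},K_{1})$ is unchanged. Since each crossing change flips one sign by $2$, the parity of the pair $(\mathrm{Lk}(K_{1},K_{2}),\mathrm{Lk}(K_{2},K_{1}))$ is also preserved, so cases~(i) and~(ii) are never mixed by CF-moves. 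For $n(L)$ in case~(ii), the same local check on Figure~\ref{CF} applies using Oikawa's Gauss-diagram definition from~\cite{Oik}.

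For realization in case~(i), the plan is to fix a family of model links $\{L_{q}\}$ indexed by half-integers $q$, with each $L_{q}$ an explicit diagram realizing $\mathrm{vlk}(L_{q})=q$, and then show that any $L$ with $\mathrm{vlk}(L)=q$ reduces via CF-moves to $L_{q}$. Working at the level of Gauss diagrams, the reduction proceeds by repeatedly cancelling pairs of classical chords of opposite sign connecting the two components; the key technical step is to verify that such cancellations are achievable by sequences of CF-moves combined with generalized Reidemeister moves. Because $\mathrm{Lk}(K_{1},K_{2})$ and $\mathrm{Lk}(K_{2},K_{1})$ have opposite parities in case~(i), every chord configuration can be reduced in this manner to the minimal model $L_{q}$, and no further invariant can survive.

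Case~(ii) follows the same template with models $L_{q,n}$ parameterized by $(q,n)\in\mathbb{Z}\times(\mathbb{N}\cup\{0\})$, and is where I expect the main difficulty. After CF-reducing $L$ so that its virtual linking number matches $q$, the residual Gauss-diagram structure encodes $n(L)$, and one must prove that the only remaining obstruction to CF-equivalence with $L_{q,n}$ is this integer. I would proceed by enumerating the Gauss-diagram moves induced by CF-moves, then inducting on the number of classical crossings — at each step cancelling a pair of chords whose net contribution to $n$ vanishes — while verifying that no further hidden invariant persists. This last completeness check for $n$ is the core obstacle, and would require a finite but delicate case analysis guided by the parity structure of the linking numbers.
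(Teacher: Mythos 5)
First, note that the paper does not prove Theorem~\ref{th-Oikawa} at all: it is quoted from Oikawa~\cite{Oik}. The honest comparison is therefore with the machinery of Sections~\ref{sec-Gauss}--\ref{sec-odd}, which reproves and generalizes this theorem via Gauss diagrams. Your invariance half is essentially sound (and matches Lemmas~\ref{lem-Lk} and~\ref{lem-n-inv}): a CF-move preserves the chords' signs and orientations, hence $\mathrm{vlk}$, the parity of the number of nonself-crossings, and $n$.

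The realization half has a genuine gap, concentrated in one missing mechanism. Your reduction for case~(i) --- ``repeatedly cancelling pairs of classical chords of opposite sign connecting the two components'' --- can only merge chords of the \emph{same} type (both oriented from $K_{1}$ to $K_{2}$, or both the other way). Carried out fully, it lands you on the normal form of Proposition~\ref{prop-NF}, namely $G_{12}(a_{12})\# G_{21}(a_{21})$ with $a_{12}=\mathrm{Lk}(K_{1},K_{2})$ and $a_{21}=\mathrm{Lk}(K_{2},K_{1})$, and stops there; nothing in your scheme explains why the \emph{sum} $\mathrm{Lk}(K_{1},K_{2})+\mathrm{Lk}(K_{2},K_{1})$ is not also an invariant in the odd case. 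The missing idea is twofold: (a) CF-moves together with R2 and R3 generate the exchange of \emph{any} two consecutive chord endpoints on a circle (Lemma~\ref{lem-HT}), at the cost of altering signs and orientations of the chords involved; and (b) when a circle is odd, dragging a chord endpoint (or an auxiliary free chord created by R1) once around that circle reverses the sign and orientation of every nonself-chord attached to it (Lemma~\ref{lem-cc}). Step (b) converts type-$(2,1)$ chords into type-$(1,2)$ chords, after which R2 cancellation kills the sum and leaves only $a_{12}-a_{21}=-2\,\mathrm{vlk}(K_{1},K_{2})$. Your appeal to ``opposite parities'' gestures at where oddness is used but supplies no move realizing it. The same omission undercuts case~(ii): there the invariants pin down $(a_{12},a_{21})$ only up to the involution $(a,b)\mapsto(-b,-a)$, and the ``delicate case analysis'' you defer is precisely the observation that this involution is realized by the full-rotation trick of (b) (the two-component analogue of Lemma~\ref{lem-equiv-Gauss}(i)), while no further identification is possible because $n$ and $\mathrm{vlk}$ are invariant. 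Without Lemmas~\ref{lem-HT} and~\ref{lem-cc}, or substitutes for them, neither direction of the sufficiency argument closes.
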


This paper aims to extend this theorem to virtual links with three or more components. 
To this end, we consider the \emph{parity} for every component of a $\mu$-component virtual link $L=K_{1}\cup\cdots\cup K_{\mu}$. 
For $1\leq i\leq\mu$, the $i$th component $K_{i}$ of $L$ is called \emph{odd} if one encounters an odd number of classical crossings in going along the $i$th component of a diagram of $L$; otherwise it is called \emph{even}. 
We say that $L$ is \emph{odd} if every component of $L$ is odd. 
On the other hand, we say that $L$ is \emph{even} if every component of $L$ is even (cf.~\cite{MWY}). 
For example, all virtual knots and classical links are even. 
In the case $\mu=2$, it follows that $L=K_{1}\cup K_{2}$ is odd if and only if $\mathrm{vlk}(K_{1},K_{2})$ is not an integer, 
and that it is even if and only if $\mathrm{vlk}(K_{1},K_{2})$ is an integer. 
There are virtual links with three or more components that are neither odd nor even. 
In particular, we say that $L$ is \emph{almost odd} if some component of $L$ is even and the other components are odd. 
We remark that if $L$ is odd, then $\mu$ is an even integer. 
On the other hand, if $L$ is almost odd, then $\mu$ is an odd integer. 

For odd virtual links and almost odd virtual links, we have the following. 

\begin{theorem}\label{th-odd}
For two $\mu$-component odd or almost odd virtual links $L=K_{1}\cup \cdots\cup K_{\mu}$ and $L'=K'_{1}\cup \cdots\cup K'_{\mu}$, the following are equivalent. 
\begin{enumerate}
\item $L$ and $L'$ are CF-equivalent. 
\item $\mathrm{Lk}(K_{i},K_{j})-\mathrm{Lk}(K_{j},K_{i})=\mathrm{Lk}(K'_{i},K'_{j})-\mathrm{Lk}(K'_{j},K'_{i})$ for any $1\leq i<j\leq\mu$. 
\end{enumerate}
\end{theorem}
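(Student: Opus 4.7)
I would first dispose of the implication $(i)\Rightarrow(ii)$, which is a direct invariance check. A classical crossing change between strands of $K_{i}$ and $K_{j}$ simultaneously alters both $\mathrm{Lk}(K_{i},K_{j})$ and $\mathrm{Lk}(K_{j},K_{i})$ by the same signed integer: swapping over and under flips the sign of the crossing, so the contribution migrates from one linking number to the other with its sign reversed, and hence the difference is unchanged. Forbidden moves preserve each of $\mathrm{Lk}(K_{i},K_{j})$ and $\mathrm{Lk}(K_{j},K_{i})$ individually. Since a CF-move decomposes into crossing changes together with a forbidden move, each difference $\mathrm{Lk}(K_{i},K_{j})-\mathrm{Lk}(K_{j},K_{i})$ is preserved by CF-equivalence.

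For the substantive direction $(ii)\Rightarrow(i)$, the plan is to reduce any odd or almost odd virtual link to a canonical form determined by the invariants in (ii). The first step uses Oikawa's unknotting result (a CF-move is an unknotting operation on virtual knots) applied to each component separately: this eliminates all self-crossings, so each $K_{i}$ becomes a simple planar loop and every remaining classical crossing is shared between two distinct components. To embed Oikawa's single-component moves inside the ambient link diagram, one first pushes any intervening strands of other components away using virtual Reidemeister moves. The second step is, for each ordered pair $i<j$, to consolidate the inter-component crossings between $K_{i}$ and $K_{j}$ into a standard twisted tangle whose signed crossing count matches $\mathrm{Lk}(K_{i},K_{j})-\mathrm{Lk}(K_{j},K_{i})$; the basic mechanism is that a CF-move can cancel a pair of inter-component crossings provided a third strand can be routed through the CF-tangle.

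The parity hypothesis enters here to guarantee that such a third strand is always available: in the odd case each component meets the other components in an odd total number of crossings, and in the almost odd case the odd components play this role while the single even component acts as a spectator, so the cancellation procedure can be iterated until only the canonical tangles remain. The main obstacle I anticipate is coordinating the pairwise reductions, since a CF-move used to simplify the $(K_{i},K_{j})$-crossings may involve a strand of some $K_{k}$ and thereby disturb an already-standardised pair. I would address this by processing pairs in a fixed order and maintaining as an inductive invariant that any disturbance on a previously arranged pair is a cancelling pair of opposite-signed crossings, which can be removed by generalised Reidemeister moves together with a further CF-move. The almost odd case requires additional bookkeeping when the current pair contains the unique even component, since one then has one fewer odd auxiliary strand available; reconciling this sub-case with the $2$-component result of Theorem~\ref{th-Oikawa} is where I expect the most delicate work.
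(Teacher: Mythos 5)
Your direction $\mathrm{(i)}\Rightarrow\mathrm{(ii)}$ is fine and matches the paper's Lemma~\ref{lem-Lk} in substance, and your overall strategy for $\mathrm{(ii)}\Rightarrow\mathrm{(i)}$ --- reduce to a normal form indexed by the differences $\mathrm{Lk}(K_{i},K_{j})-\mathrm{Lk}(K_{j},K_{i})$ --- is the right one (it is Proposition~\ref{prop-NF-odd} in the paper). But the crucial step is missing, and the mechanism you substitute for it does not work. What has to be shown is that a crossing of type $(j,i)$ can be traded for a crossing of type $(i,j)$ with opposite sign, so that $\mathrm{Lk}(K_{i},K_{j})$ and $\mathrm{Lk}(K_{j},K_{i})$ collapse to their difference; merely ``cancelling pairs of inter-component crossings'' via R2 never mixes the two types and would leave both $a_{ij}$ and $a_{ji}$ as separate data. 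The paper achieves the trade by Lemma~\ref{lem-cc}: since an odd circle $C$ carries an odd number of chord endpoints, sliding an endpoint of a nonself-chord $\gamma$ once around $C$ (by Lemma~\ref{lem-HT}) passes an even number of other endpoints and so returns $\gamma$ to its original sign and orientation while flipping every other nonself-chord on $C$; combined with an auxiliary free chord slid around $C$, this reverses the sign and orientation of $\gamma$ alone. That is where the parity hypothesis actually enters.

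Your proposed mechanism --- that parity guarantees ``a third strand can be routed through the CF-tangle'' to effect the cancellation --- is both unsubstantiated and internally inconsistent: the theorem covers $2$-component odd links (Theorem~\ref{th-Oikawa}(i)), where no third component exists at all, yet the same reduction must go through. The parity of a component is not about the availability of auxiliary strands; it is the monodromy of transporting a chord endpoint around that component. Two smaller issues: in your first step, eliminating self-crossings of $K_{i}$ inside the link cannot be done by ``pushing intervening strands away with virtual Reidemeister moves,'' since the self-chord endpoints are genuinely interleaved with nonself-chord endpoints and each exchange (Lemma~\ref{lem-HT}) alters the sign and orientation of the nonself-chord involved --- side effects you do not track; and your inductive bookkeeping for ``disturbed'' pairs is unnecessary once the correct lemma is in place, because the orientation-reversal operation affects only chords attached to the one circle being processed and restores all of them except the targeted chord.
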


By the classification of welded links, which are a quotient of virtual links by the ``over'' forbidden move in the left of Figure~\ref{forbidden}, 
up to crossing changes given in \cite[Theorem 1]{ABMW}, Theorem~\ref{th-odd} means that CF-equivalence coincides with the equivalence relation generated by crossing changes and forbidden moves for odd virtual links and almost odd virtual links. 
Note that the difference $\mathrm{Lk}(K_{i},K_{j})-\mathrm{Lk}(K_{j},K_{i})$ in Theorem~\ref{th-odd} is written as $\mathrm{vlk}_{ij}-\mathrm{vlk}_{ji}$ in~\cite{ABMW}. 
For odd virtual links and almost odd virtual links, a CF-move is indeed a combination of crossing changes and a forbidden move.

For even virtual links, we only consider the case of three components. 
For a 3-component even virtual link $L=K_{1}\cup K_{2}\cup K_{3}$, we define an invariant $n(K_{i},K_{j})\in\mathbb{N}\cup\{0\}$ $(1\leq i< j\leq3)$ as an extension of Oikawa's $n$-invariant given in~\cite{Oik} (Definition~\ref{def-n-inv}). 
Moreover, we introduce two invariants $\tau_{0}(L), \tau_{1}(L)\in\mathbb{Z}$ of $L$ (Definition~\ref{def-tau-inv}). 
Then we have the following. 

\begin{theorem}\label{th-even}
For two $3$-component even virtual links $L=K_{1}\cup K_{2}\cup K_{3}$ and $L'=K'_{1}\cup K'_{2}\cup K'_{3}$, the following are equivalent. 
\begin{enumerate}
\item $L$ and $L'$ are CF-equivalent. 
\item $\mathrm{Lk}(K_{i},K_{j})-\mathrm{Lk}(K_{j},K_{i})=\mathrm{Lk}(K'_{i},K'_{j})-\mathrm{Lk}(K'_{j},K'_{i})$ and 
$n(K_{i},K_{j})=n(K'_{i},K'_{j})$ for any $1\leq i<j\leq3$, 
and $\tau_{0}(L)-\tau_{1}(L)=\tau_{0}(L')-\tau_{1}(L')$. 
\end{enumerate}
\end{theorem}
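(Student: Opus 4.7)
The plan is to prove both implications by combining Oikawa's two-component classification (Theorem~\ref{th-Oikawa}) with an analysis of how chords belonging to distinct pairs of components interleave in a Gauss diagram. The easy direction is invariance; the hard direction is reducing to a normal form and realizing CF-equivalences between normal forms with matching data.

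For the invariance direction (i) $\Rightarrow$ (ii), I would verify that each listed quantity is preserved by generalized Reidemeister moves and by a single CF-move. The differences $\mathrm{Lk}(K_{i},K_{j})-\mathrm{Lk}(K_{j},K_{i})$ are twice the virtual linking numbers and are standardly CF-invariant. Since $L$ is even, each 2-component sublink $K_{i}\cup K_{j}$ is again even, and $n(K_{i},K_{j})$ is the extension of Oikawa's $n$-invariant applied to this sublink, so its invariance is inherited from Theorem~\ref{th-Oikawa}. The genuinely new content is invariance of $\tau_{0}(L)-\tau_{1}(L)$, which I would establish by a finite case check on Gauss diagrams: list the possible component labels of the four strands involved in a CF-move (in particular the cases in which all three components appear locally) and verify that while $\tau_{0}(L)$ and $\tau_{1}(L)$ individually may jump, their difference does not.

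For the sufficiency direction (ii) $\Rightarrow$ (i), the strategy is to reduce each 3-component even virtual link by CF-moves to a Gauss-diagram normal form whose combinatorial data is determined entirely by the right-hand side of (ii). First, for each pair $\{i,j\}$, apply Theorem~\ref{th-Oikawa} to the 2-component even sublink $K_{i}\cup K_{j}$ to replace the chords between $K_{i}$ and $K_{j}$ by a standard configuration prescribed by $\mathrm{vlk}(K_{i},K_{j})$ and $n(K_{i},K_{j})$, while carefully tracking that this simplification does not damage chords belonging to the remaining pair; self-chords on each component can be cleared using the fact that a CF-move is an unknotting operation on virtual knots. Second, I would establish an exchange lemma on Gauss diagrams showing that the residual freedom lies only in the cyclic interleaving of chords from different pairs, and that this freedom is exactly measured by $\tau_{0}(L)-\tau_{1}(L)$. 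Finally, any two normal forms with equal pairwise data and equal $\tau_{0}-\tau_{1}$ are transported to a common representative by these exchange moves.

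The main obstacle is the second step of the sufficiency argument: identifying a sufficient supply of local CF-move modifications that together act transitively on normal forms while being controlled by $\tau_{0}-\tau_{1}$ alone. This is a combinatorial problem in the spirit of Oikawa's 2-component argument but now with a triangular interaction among three components, and it is precisely the reason that $\tau_{0}$ and $\tau_{1}$ must be packaged as a difference rather than as separate invariants. A secondary bookkeeping subtlety appears in the first step: each application of Theorem~\ref{th-Oikawa} to one pair must be monitored so as not to reintroduce uncontrolled chords between the other two components, which I would handle by choosing moves supported in a neighborhood of two components at a time and explicitly listing their side effects on the third.
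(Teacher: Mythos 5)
Your overall architecture (prove invariance, then normalize and match normal forms) matches the paper's, but two of your key reductions do not work as stated, and the step you yourself identify as ``the main obstacle'' is exactly where the content of the theorem lies and is left unresolved. First, your claim that each $2$-component sublink $K_{i}\cup K_{j}$ of an even $3$-component link is again even is false: the link represented by $G_{12}(1)\# G_{13}(1)\# G_{23}(1)$ is even (each circle carries two endpoints), yet every $2$-component sublink is odd. Consequently $n(K_{i},K_{j})$ cannot be read off as Oikawa's invariant of the sublink --- indeed, as defined in the paper it is not an invariant of the sublink at all, since the parity condition defining equivalence of chords between $C_{i}$ and $C_{j}$ counts endpoints of chords running to the third circle. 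Its CF-invariance inside $L$ (where a CF-move may involve a strand of $K_{k}$) requires a direct check, not an appeal to Theorem~\ref{th-Oikawa}; the same problem undermines the first step of your sufficiency argument, which standardizes each pair by applying Theorem~\ref{th-Oikawa} to a sublink that need not satisfy its hypotheses.

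Second, and more seriously, your ``exchange lemma'' describing the residual freedom as ``cyclic interleaving of chords from different pairs, measured by $\tau_{0}-\tau_{1}$'' is not established and is not the right picture. The paper's Proposition~\ref{prop-NF} puts \emph{any} Gauss diagram into the form $G(a_{12},a_{21};a_{13},a_{31};a_{23},a_{32})$, in which there is no interleaving left: every triple is even, $\tau_{1}=0$, and $\tau_{0}-\tau_{1}=(a_{12}+a_{21})(a_{13}+a_{31})(a_{23}+a_{32})$ (Lemma~\ref{lem-value}). The genuine residual ambiguity is a finite set of sign choices: the pairwise data determine each $(a_{ij},a_{ji})$ only up to the involution $(a_{ij},a_{ji})\mapsto(-a_{ji},-a_{ij})$, giving eight cases. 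The value of $\tau_{0}-\tau_{1}$ rules out four of them, and the remaining four are connected by the explicit CF-equivalences of Lemma~\ref{lem-equiv-Gauss}, which are realized by adding a free chord by R1, sliding one of its endpoints around a circle via Lemma~\ref{lem-HT} to flip all nonself-chords meeting that circle, and deleting it. Without an analogue of that realization lemma and the finite case analysis, your argument does not close.
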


Since a 3-component virtual link is either almost odd or even, 
a complete classification of 3-component virtual links up to CF-equivalence is provided by Theorems~\ref{th-odd} and~\ref{th-even}.

\section{Gauss diagrams up to CF-moves}\label{sec-Gauss}
In this section, we review the definition of Gauss diagrams and give a normal form of a $\mu$-component virtual link up to CF-equivalence in terms of Gauss diagrams (Proposition~\ref{prop-NF}). 

A \emph{Gauss diagram} is a disjoint union of ordered and oriented circles together with signed and oriented chords whose endpoints lie disjointly on the circles. 
Throughout this paper, all circles of a Gauss diagram are assumed to be  oriented counterclockwise and arranged in increasing order from left to right. 
A chord in a Gauss diagram is called a \emph{self-chord} if both endpoints lie on the same circle of the Gauss diagram; 
otherwise it is called a \emph{nonself-chord}. 
In particular, a self-chord is \emph{free} if its endpoints are adjacent on the circle. 
A nonself-chord is \emph{of type $(i,j)$} if it is oriented from the $i$th circle to the $j$th one $(i\neq j)$. 

Given a $\mu$-component virtual link diagram with $n$ classical crossings, its Gauss diagram is defined to be the union of $\mu$~circles and $n$~chords connecting the preimage of each classical crossing. 
Each chord is equipped with the sign of the corresponding classical crossing, and oriented from the over-crossing to the under-crossing. 
By definition, the virtual Reidemeister moves V1--V4 on virtual link diagrams do not affect the corresponding Gauss diagrams. 
On the other hand, the classical Reidemeister moves R1--R3 change the Gauss diagrams as shown in Figure~\ref{Rmoves-Gauss}. 
Here, there are several kinds of R3 depending on the signs and orientations of chords, and one of them is shown in the figure. 
The others are generated by R1, R2, and this R3 (cf.~\cite{P}). 
Therefore, a virtual link can be considered as an equivalence class of Gauss diagrams under Reidemeister moves R1--R3 (cf.~\cite{GPV,Kau}). 

\begin{figure}[htb]
  \begin{center}
    \begin{overpic}[width=12cm]{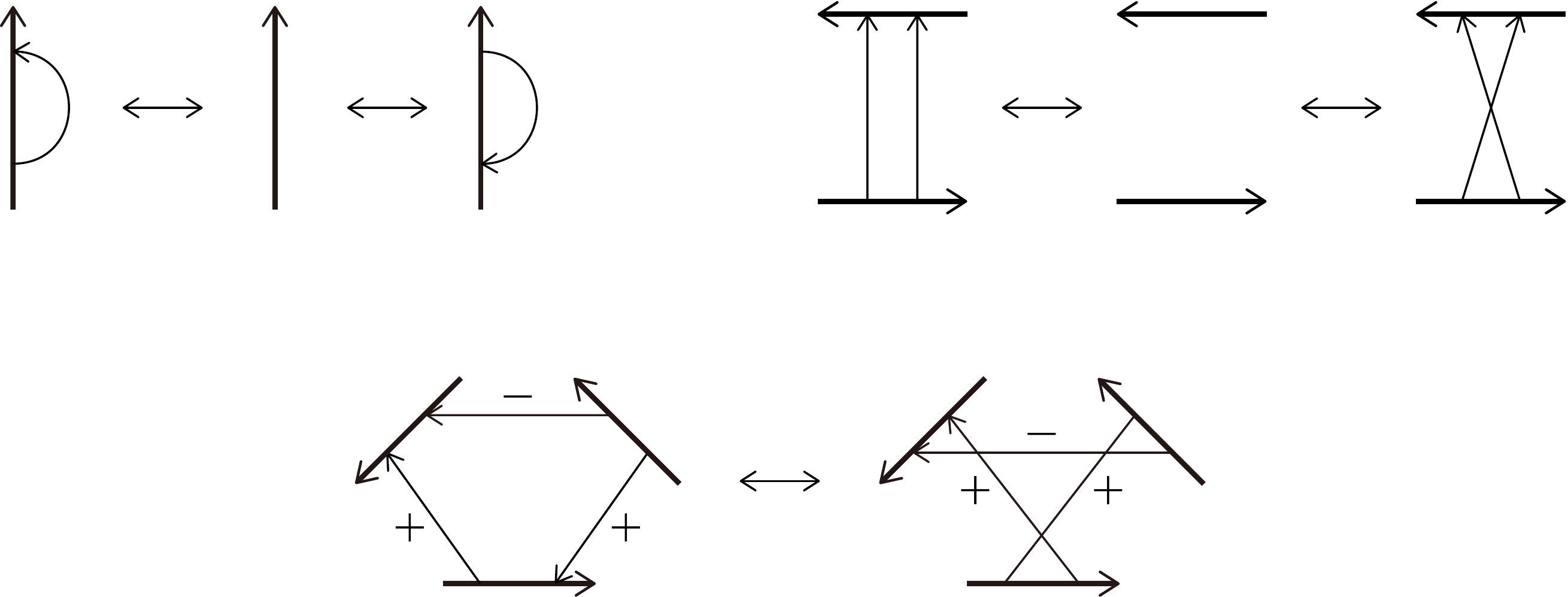}
      \put(30,112){R1}
      \put(78.5,112){R1}
      \put(221,112){R2}
      \put(286,112){R2}
      \put(164,30.5){R3}
      \put(15,95){$\e$}
      \put(117,95){$\e$}
      \put(180,95){$\e$}
      \put(203,95){$-\e$}
      \put(313,95){$\e$}
      \put(331,95){$-\e$}
    \end{overpic}
  \end{center}
  \caption{Reidemeister moves on Gauss diagrams with $\e\in\{\pm1\}$}
  \label{Rmoves-Gauss}
\end{figure}

A \emph{CF-move} on Gauss diagrams is a local move as shown in Figure~\ref{CF-Gauss}. 
Two Gauss diagrams $G$ and $G'$ are \emph{CF-equivalent} if they are related by a finite sequence of Reidemeister moves R1--R3 and CF-moves. 
It is denoted by $G\sim G'$. 
We emphasize that two virtual links are CF-equivalent if and only if their Gauss diagrams are CF-equivalent. 

\begin{figure}[htb]
\centering
  \begin{overpic}[width=6cm]{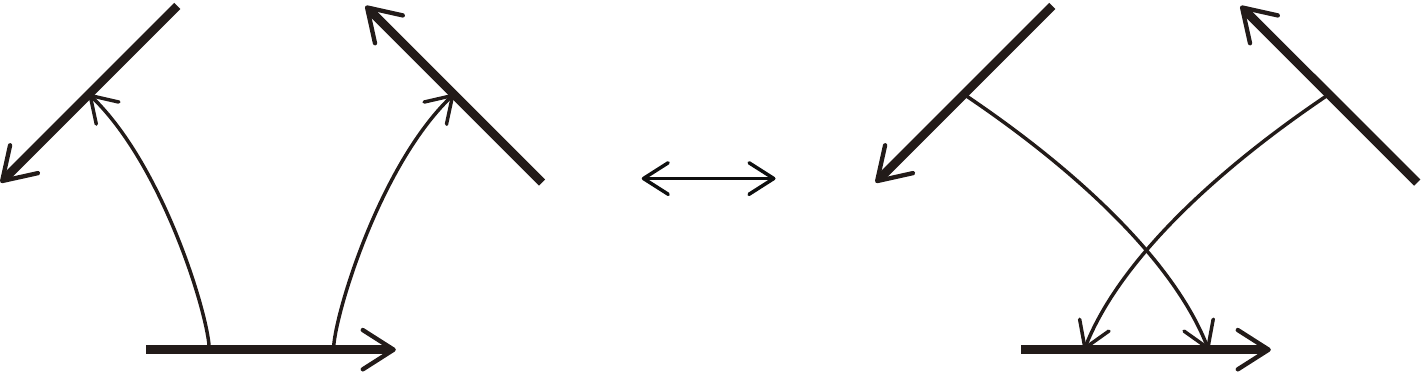}
    \put(11,17){$\e$}
    \put(50,17){$\e'$}
    \put(114,17){$-\e$}
    \put(148,17){$-\e'$}
  \end{overpic}
\caption{A CF-move on Gauss diagrams with $\e, \e'\in\{\pm1\}$}
\label{CF-Gauss}
\end{figure}

\begin{lemma}\label{lem-HT}
If two Gauss diagrams are related by a local move as shown in Figure~\ref{HT}, then they are CF-equivalent. 
\end{lemma}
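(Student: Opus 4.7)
The plan is to realize the HT exchange as a short composition of an R2 move, a single CF-move, and further R2/R3 moves that undo the auxiliary part. The key idea is that a CF-move acts on a configuration of four chord endpoints, whereas the HT move affects only two; so I would first ``inflate'' the local picture by inserting an R2 pair adjacent to the endpoints that the HT move is supposed to swap, and then collapse the picture again after using the CF-move.

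More concretely, I would proceed in four steps. First, apply an R2 move to introduce a canceling pair of chords (with signs $\e$ and $-\e$) immediately next to the two chord endpoints targeted by the HT move. This produces a local configuration with four endpoints on the relevant portion(s) of the circle(s), arranged so that, after possibly a single R3 to adjust their cyclic order, they match precisely the left-hand side of the CF-move in Figure~\ref{CF-Gauss}. Second, apply the CF-move: by the definition of the move, this flips the signs of the four chord endpoints in question, turning $\e,\e'$ into $-\e,-\e'$, and simultaneously exchanges the head/tail pattern of the two original chord endpoints. Third, use R3 moves to slide the auxiliary chord (still a canceling pair, now with flipped signs) away from the two original endpoints. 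Fourth, apply R2 in reverse to delete the auxiliary pair. What remains is exactly the configuration on the right of the HT move.

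The main obstacle is the sign and orientation bookkeeping. Since the CF-move changes the signs of all four participating endpoints and R2 only admits a pair of opposite signs, I must choose the sign $\e$ of the inserted R2 pair compatibly with both the signs of the two original chords and the required output signs of the HT move. If the figure for HT depicts several variants depending on which of the two endpoints are heads and which are tails (and on whether they lie on the same circle or on two different circles), each variant will have to be handled, but by symmetry and by the freedom to choose $\e$, the argument should be essentially the same in every case, modulo mirroring the auxiliary R2 pair. I would therefore first treat the generic case in full and then indicate briefly why the remaining variants reduce to it by reversing orientations of chords or by applying a preliminary R2/R3 reshuffle.
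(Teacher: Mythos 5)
Your overall strategy --- insert an auxiliary cancelling pair by an R2-move, apply CF-moves, then cancel the pair again by R2 --- is exactly the strategy of the paper's proof (Figures~\ref{pf-lem-HT-same} and~\ref{pf-lem-HT-opposite}). However, the central step as you describe it does not go through: a \emph{single} CF-move cannot realize the exchange of Figure~\ref{HT}. The CF-move of Figure~\ref{CF-Gauss} is a move on \emph{two} chords whose consecutive endpoints are of the \emph{same} type (both initial or both terminal), and it reverses the signs of precisely those two chords; it is not a four-endpoint move that ``flips the signs of the four chord endpoints in question,'' and it does not exchange a head with a tail. Now note that R1, R2, and R3 never change the sign of a chord that persists through the move, so the only way the signs of the two original chords $\gamma$ and $\gamma'$ can both change from $\e,\e'$ to $-\e,-\e'$ is for each of $\gamma$ and $\gamma'$ to participate in an odd number of CF-moves. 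A single CF-move involves exactly two chords, so it would have to be applied to $\gamma$ and $\gamma'$ themselves; but that requires two consecutive same-type endpoints of $\gamma$ and $\gamma'$, whereas in the configuration of Figure~\ref{HT} the two available consecutive endpoints are of opposite types, and the remaining endpoints of $\gamma$ and $\gamma'$ may lie on different circles and so can never be made adjacent. Hence one CF-move is not enough.

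The paper resolves this by using \emph{two} CF-moves, each pairing one of $\gamma,\gamma'$ with one chord of the auxiliary R2 pair: then $\gamma$ and $\gamma'$ each have their sign reversed exactly once, and the two auxiliary chords also each have their sign reversed exactly once, so they still carry opposite signs and can be deleted by the final R2-move. If you replace your step ``apply the CF-move'' by two such CF-moves (after the R3-move that puts the relevant endpoints in the right cyclic order), and verify the two cases $\e\e'=1$ and $\e\e'=-1$ explicitly rather than appealing to symmetry, your argument becomes the paper's proof.
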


\begin{figure}[htb]
\centering
  \begin{overpic}[width=6cm]{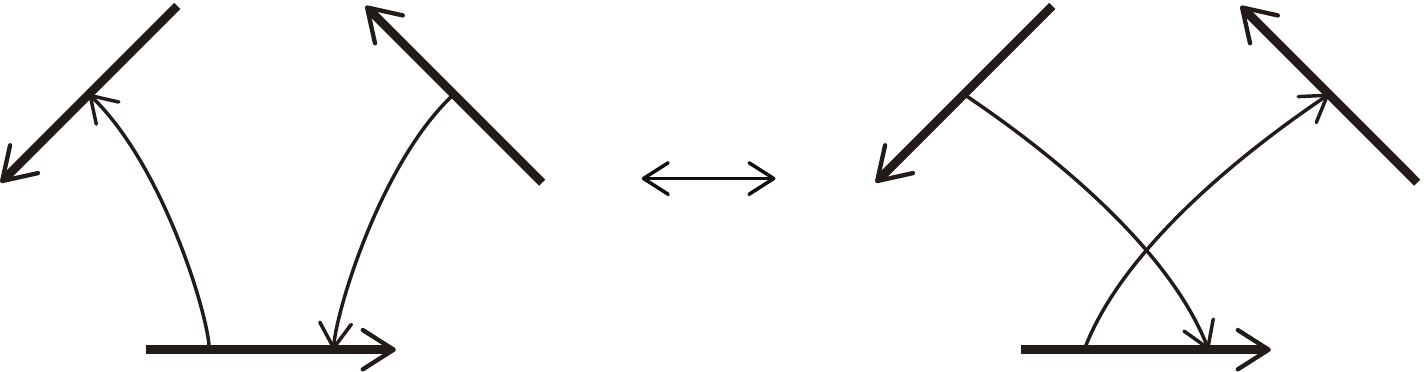}
    \put(11,17){$\e$}
    \put(50,17){$\e'$}
    \put(114,17){$-\e$}
    \put(148,17){$-\e'$}
  \end{overpic}
\caption{Exchanging two consecutive initial and terminal endpoints of a pair of chords}
\label{HT}
\end{figure}

\begin{proof}
The proof in the case $\e\e'=1$ follows from Figure~\ref{pf-lem-HT-same}, and that in the case $\e\e'=-1$ follows from Figure~\ref{pf-lem-HT-opposite}.
\end{proof}

\begin{figure}[htb]
\centering
  \begin{overpic}[width=10cm]{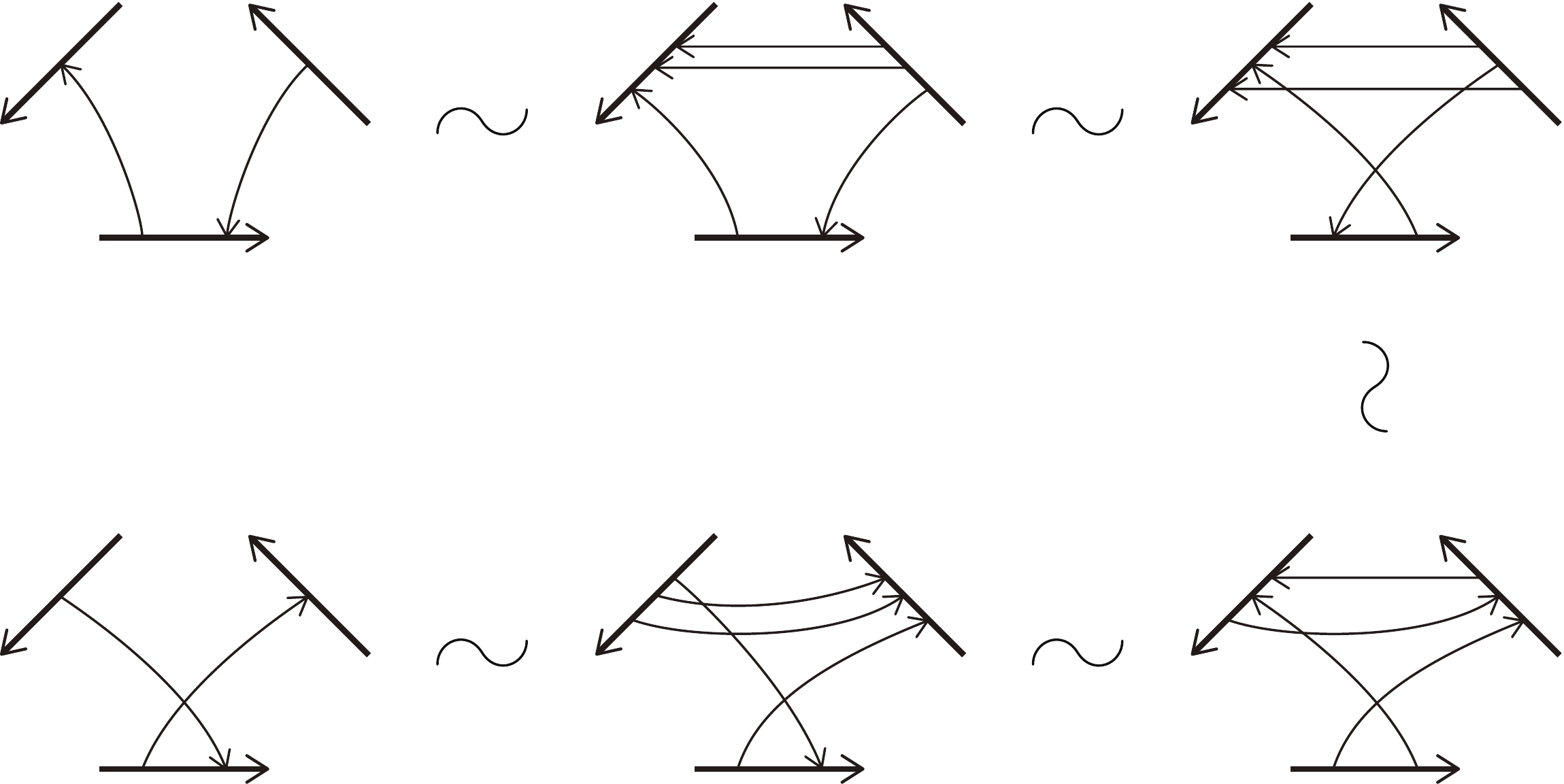}
    \put(81.5,127){R2}
    \put(189.5,127){R3}
    \put(256,68){CF}
    \put(81.5,30){R2}
    \put(189.5,30){CF}
    \put(12,114){$\e$}
    \put(50,114){$\e$}
    \put(116,114){$\e$}
    \put(162,114){$\e$}
    \put(140,137){$\e$}
    \put(135,122.5){$-\e$}
    \put(248.5,137){$\e$}
    \put(243.5,128.2){$-\e$}
    \put(236,114){$\e$}
    \put(259,114){$\e$}
    \put(7,21){$-\e$}
    \put(46,21){$-\e$}
    \put(122,18){$-\e$}
    \put(155,18){$-\e$}
    \put(138,36){$-\e$}
    \put(143,21.5){$\e$}
    \put(235,18){$\e$}
    \put(263,18){$-\e$}
    \put(248.5,40){$\e$}
    \put(248.5,29.5){$\e$}
  \end{overpic}
\caption{Proof of Lemma~\ref{lem-HT} in the case $\e\e'=1$}
\label{pf-lem-HT-same}
\end{figure}

\begin{figure}[htb]
\centering
  \begin{overpic}[width=10cm]{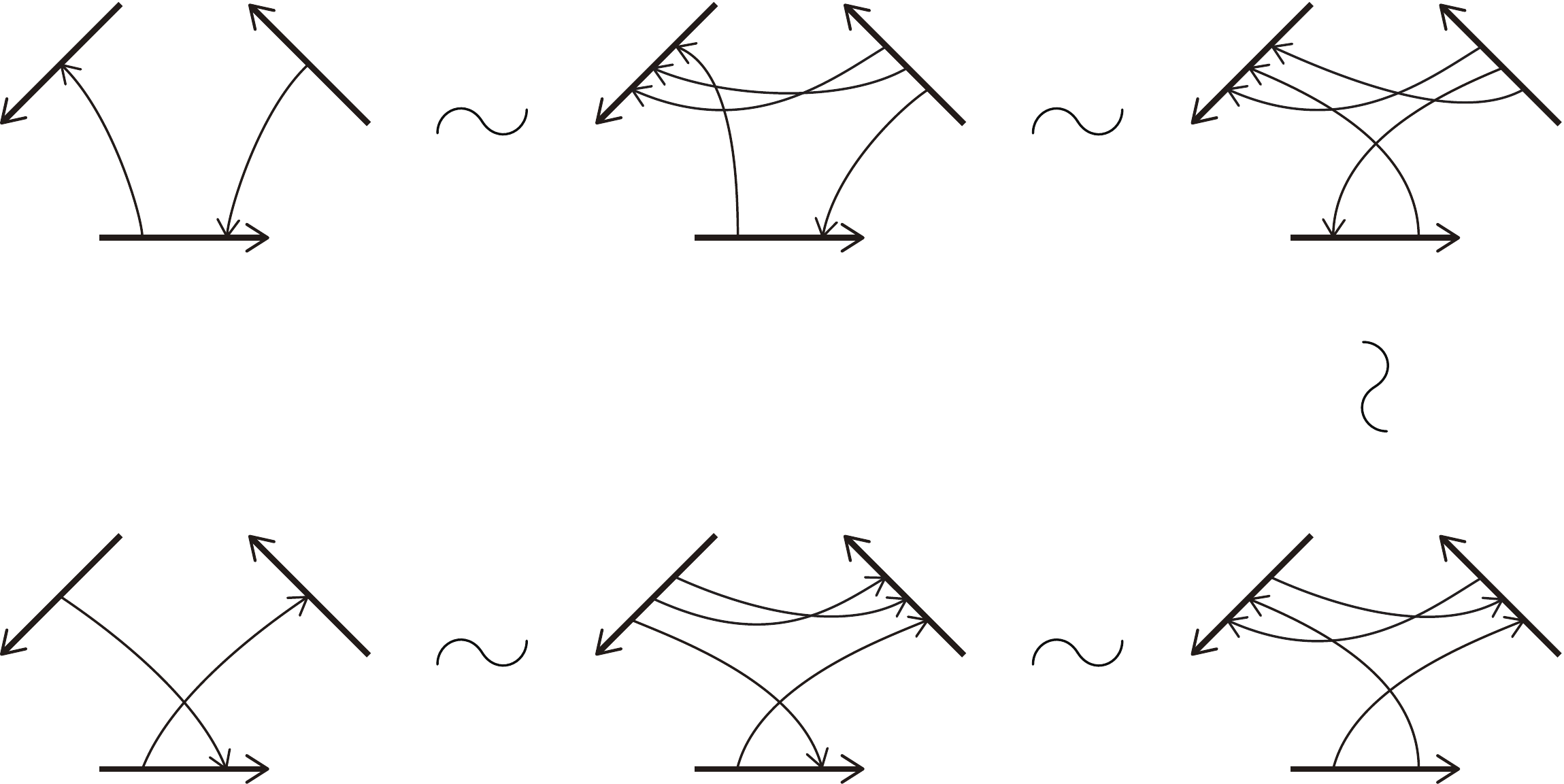}
    \put(81.5,127){R2}
    \put(189.5,127){R3}
    \put(256,68){CF}
    \put(81.5,30){R2}
    \put(189.5,30){CF}
    \put(12,114){$\e$}
    \put(50,114){$-\e$}
    \put(126,107){$\e$}
    \put(157,107){$-\e$}
    \put(149,132){$\e$}
    \put(148,120.5){$-\e$}
    \put(258,132){$\e$}
    \put(262,117.5){$-\e$}
    \put(229,107){$-\e$}
    \put(260,107){$\e$}
    \put(7,21){$-\e$}
    \put(48,21){$\e$}
    \put(130,9){$\e$}
    \put(149,9){$-\e$}
    \put(132.5,35.5){$\e$}
    \put(141.5,35.5){$-\e$}
    \put(239,9){$\e$}
    \put(260,9){$\e$}
    \put(258.5,35.5){$\e$}
    \put(240,35.5){$\e$}
  \end{overpic}
\caption{Proof of Lemma~\ref{lem-HT} in the case $\e\e'=-1$}
\label{pf-lem-HT-opposite}
\end{figure}

\begin{remark}\label{rem-Oik}
By Lemma~\ref{lem-HT}, any two consecutive endpoints of a pair of chords $\gamma$ and $\gamma'$ can be exchanged up to CF-equivalence, 
although the signs and orientations of $\gamma$ and $\gamma'$ are altered. 
Therefore, up to CF-equivalence, we can deform every self-chord in a Gauss diagram into a free chord, and remove it by an R1-move. 
In particular, any Gauss diagram of a virtual knot is CF-equivalent to the one without chords, which represents the trivial knot. 
This is an alternative proof of~\cite[Theorem~1.2]{Oik} in terms of Gauss diagrams. 
\end{remark}

For $1\leq i\neq j\leq\mu$ and an integer $a_{ij}\in\mathbb{Z}$, let $G_{ij}(a_{ij})$ be the Gauss diagram with $\mu$ circles $C_{1},\ldots,C_{\mu}$, which consists of only $|a_{ij}|$ horizontal nonself-chords of type~$(i,j)$ with sign $\e_{ij}$, where $a_{ij}=\e_{ij}|a_{ij}|$. 
Let $G$ and $G'$ be Gauss diagrams with $\mu$ circles $C_{1},\ldots,C_{\mu}$ and $C'_{1},\ldots,C'_{\mu}$, respectively. 
We put $C_{1},\ldots,C_{\mu}$ on top and $C'_{1},\ldots,C'_{\mu}$ on bottom. 
As shown in Figure~\ref{surgery}, for every $i\in\{1,\ldots,\mu\}$, we connect $C_{i}$ and $C'_{i}$ by an unknotted arc, and then form a new circle $C_{i}\# C'_{i}$ by surgery along the unknotted arc. 
The Gauss diagram consisting of $\mu$ circles $C_{1}\# C'_{1}, \ldots,C_{\mu}\# C'_{\mu}$ is called a \emph{connected sum} of $G$ and $G'$ and denoted by $G\# G'$. 
An example is shown in Figure~\ref{ex-sum}.

\begin{figure}[htb]
\centering
  \begin{overpic}[width=5cm]{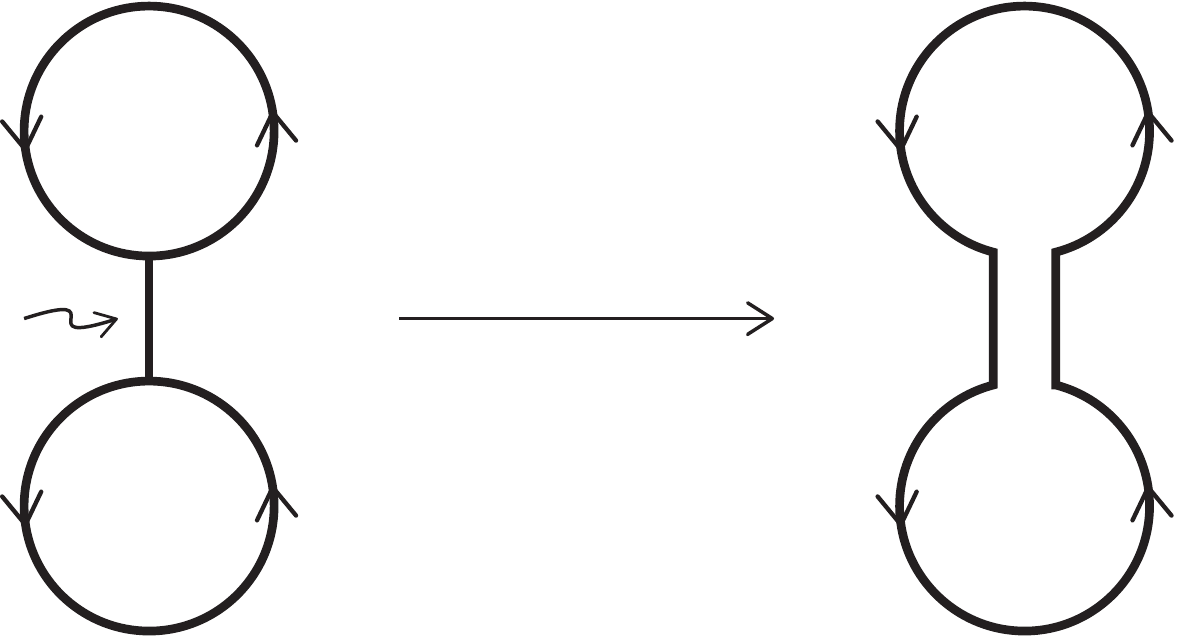}
    \put(13.5,58){$C_{i}$}
    \put(13,12.5){$C'_{i}$}
    \put(-45,36){unknotted}
    \put(-28,26){arc}
    \put(54,44){surgery}
    \put(148,35){$C_{i}\# C'_{i}$}
  \end{overpic}
\caption{Forming a new circle $C_{i}\# C'_{i}$ from two circles $C_{i}$ and $C'_{i}$ by surgery along an unknotted arc}
\label{surgery}
\end{figure}

\begin{figure}[htb]
\centering
\begin{overpic}[width=5.5cm]{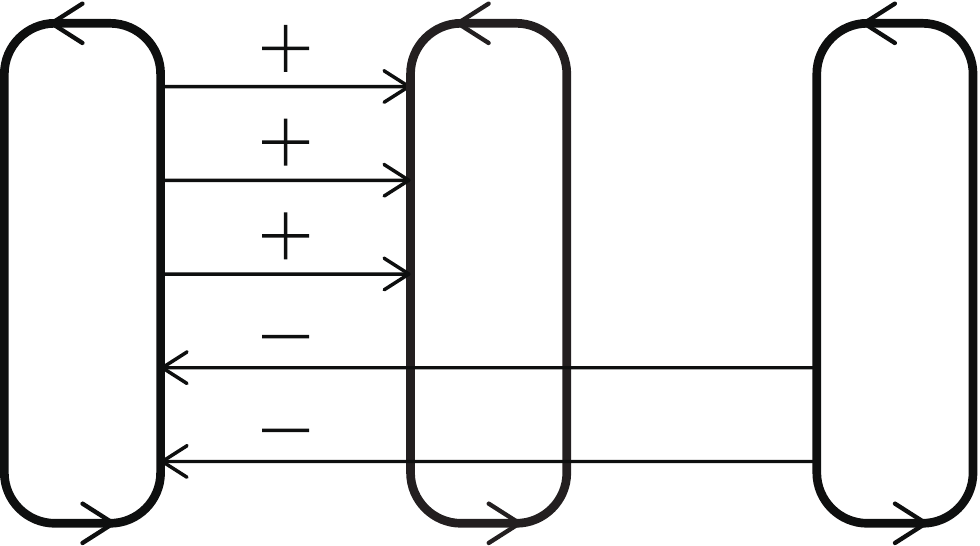}
\end{overpic}
\caption{A connected sum $G_{12}(3)\# G_{31}(-2)$ of two Gauss diagrams $G_{12}(3)$ and $G_{31}(-2)$ in the case $\mu=3$}
\label{ex-sum}
\end{figure}

\begin{proposition}\label{prop-NF}
Any Gauss diagram $G$ of a $\mu$-component virtual link is CF-equivalent to $\#_{1\leq i<j\leq\mu}\left(G_{ij}(a_{ij})\# G_{ji}(a_{ji})\right)$ for some $a_{ij}, a_{ji}\in\mathbb{Z}$, where the connected sum runs over all $i\neq j\in\{1,\ldots,\mu\}$ in lexicographic order of $i$ and $j$. 
\end{proposition}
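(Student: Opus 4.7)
The plan is to reduce an arbitrary Gauss diagram $G$ to the prescribed connected-sum normal form in three stages: remove self-chords, sort the remaining nonself-chords on each circle by type, and then cancel opposite-signed pairs within each type until only uniform-sign horizontal blocks remain. Stage one is already handled by Remark~\ref{rem-Oik}: iterated applications of Lemma~\ref{lem-HT} slide one endpoint of a self-chord past its neighbors until the chord becomes free, and an R1-move then deletes it. Repeating produces a CF-equivalent diagram $G_{1}$ containing only nonself-chords.

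For stage two, I would use Remark~\ref{rem-Oik} as a permutation tool: any two consecutive endpoints on a circle can be exchanged, at the cost of flipping the signs (and possibly the orientations) of the two chords involved. Iterating, I rearrange the endpoints on each circle $C_{i}$ so that, for each $j\ne i$, the endpoints of chords between $C_{i}$ and $C_{j}$ occupy a contiguous arc, and these arcs are arranged around $C_{i}$ in lexicographic order. This partitions the nonself-chords into groups labeled by unordered pairs $\{i,j\}$. Stage three treats each group separately: after the permutation, chords of type $(i,j)$ within a group are parallel, so any two of opposite sign sit in R2-configuration and cancel. Sign mismatches that survive the permutation can be remedied by the CF-move of Figure~\ref{CF-Gauss}, which simultaneously flips the signs of two neighboring endpoints and thereby converts a stubborn pair into an R2-cancellable one. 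Iterating leaves $|a_{ij}|$ parallel chords of sign $\operatorname{sign}(a_{ij})$ for each ordered type, and collecting the groups in lexicographic order realizes the target form $\#_{1\le i<j\le\mu}\bigl(G_{ij}(a_{ij})\# G_{ji}(a_{ji})\bigr)$.

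The main obstacle is controlling the bookkeeping of signs, because each Lemma~\ref{lem-HT} exchange flips two signs, so naive reordering can undo sign arrangements already achieved in other parts of the diagram. I would mitigate this by first carrying out the entire type-sorting permutation, and only then performing the sign-correcting CF-moves and R2-cancellations locally within each block; these later operations are confined to chords of a common type that are about to be paired off anyway, so any stray sign flips remain inside that very block. A secondary verification is that the locality of the R2, R3, and CF-moves prevents the cancellation procedure in one block from introducing new chords between other pairs of circles; this follows directly from the definitions of the moves. Once these points are in place, the integers $a_{ij}\in\mathbb{Z}$ are just the signed counts of chords of type $(i,j)$ surviving in each block, and the proposition follows.
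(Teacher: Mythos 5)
Your proof is correct and follows essentially the same route as the paper's: eliminate self-chords via Remark~\ref{rem-Oik}, sort the nonself-chords into parallel blocks by type using Lemma~\ref{lem-HT}, and cancel adjacent opposite-signed chords of the same type by R2-moves, leaving the signed counts $a_{ij}$. The only superfluous element is the proposed CF-move ``sign repair'' in stage three (a CF-move also reverses orientations, so it would change a chord's type), but this step is never actually needed since R2-cancellation alone reduces each block to chords of a uniform sign.
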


\begin{proof}
By Remark~\ref{rem-Oik}, we may assume that every circle $C_{i}$ of $G$ has no self-chords up to CF-equivalence. 
For the nonself-chords in $G$, we move the endpoints by Lemma~\ref{lem-HT} to obtain parallel nonself-chords connecting $C_{i}$ and $C_{j}$ that are arranged in lexicographic order of $i\neq j\in\{1,\ldots,\mu\}$ from top to bottom.
For each $1\leq i<j\leq\mu$, we make the parallel nonself-chords connecting $C_{i}$ and $C_{j}$ so that the nonself-chords of type~$(i,j)$ are arranged above those of type~$(j,i)$. 
If two consecutive nonself-chords of the same type have opposite signs, then we cancel them by an R2-move. 
Finally $G$ is CF-equivalent to $\#_{1\leq i<j\leq\mu}\left(G_{ij}(a_{ij})\# G_{ji}(a_{ji})\right)$ for some $a_{ij}, a_{ji}\in\mathbb{Z}$. 
\end{proof}

\section{Proof of Theorem~\ref{th-odd}}\label{sec-odd}
Let $L=K_{1}\cup\cdots\cup K_{\mu}$ be a $\mu$-component virtual link and $G$ its Gauss diagram with $\mu$ circles $C_{1},\ldots,C_{\mu}$. 
For $1\leq i\neq j\leq\mu$, the \emph{$(i,j)$-linking number} of $L$ is defined to be the sum of signs of all nonself-chords of type~$(i,j)$ in $G$. 
It is denoted by $\mathrm{Lk}(K_{i},K_{j})$. 
This integer $\mathrm{Lk}(K_{i},K_{j})$ is an invariant of the virtual link $L$ (cf.~\cite[Section 1.7]{GPV}). 
It can be seen that the difference $\mathrm{Lk}(K_{i},K_{j})-\mathrm{Lk}(K_{j},K_{i})$ is equal to $-2\mathrm{vlk}(K_{i},K_{j})$, 
where $\mathrm{vlk}(K_{i},K_{j})$ denotes virtual linking number of $K_{i}$ and $K_{j}$ given in \cite{Oik,Oka}. 
Therefore, the following result is obtained from Theorem~\ref{th-Oikawa}. 

\begin{lemma}\label{lem-Lk}
For any $1\leq i< j\leq\mu$, the difference $\mathrm{Lk}(K_{i},K_{j})-\mathrm{Lk}(K_{j},K_{i})$ is invariant under CF-moves. 
\end{lemma}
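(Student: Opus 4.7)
The plan is to work entirely in the Gauss-diagram formalism of Section~\ref{sec-Gauss} and verify directly that the quantity $\mathrm{Lk}(K_{i},K_{j})-\mathrm{Lk}(K_{j},K_{i})$ is unaffected by a single CF-move. Each $\mathrm{Lk}(K_{i},K_{j})$ is already known to be an invariant of the underlying virtual link, hence preserved by all Reidemeister moves on Gauss diagrams, so the whole matter reduces to checking CF-moves.

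Reading off from Figure~\ref{CF-Gauss}, a CF-move affects exactly two chords $\gamma,\gamma'$ whose endpoints at some common circle $C_{k}$ lie in adjacent positions; its effect is to negate the sign and reverse the orientation of each of these two chords, while leaving the locations of all endpoints fixed. In particular, a chord that was a nonself-chord of type $(p,q)$ becomes a nonself-chord of type $(q,p)$ with the opposite sign, and a self-chord remains a self-chord.

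The verification then reduces to a short case analysis, treating each of $\gamma,\gamma'$ independently since the change to each chord depends only on that chord. If the chord is not a nonself-chord between $C_{i}$ and $C_{j}$, its type stays out of $\{(i,j),(j,i)\}$, so it contributes $0$ to $\mathrm{Lk}(K_{i},K_{j})-\mathrm{Lk}(K_{j},K_{i})$ both before and after. If it is of type $(i,j)$ with sign $\e$ before the move, its contribution to the difference is $+\e$; after the move it is of type $(j,i)$ with sign $-\e$, contributing $-(-\e)=+\e$; the case of type $(j,i)$ is symmetric. Hence each of $\gamma,\gamma'$ makes the same contribution to the difference before and after, and the difference is preserved.

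The main obstacle is really just the careful bookkeeping of how the CF-move acts on a chord, namely that negation of sign is paired with reversal of orientation, which must be read off correctly from Figure~\ref{CF-Gauss}. Once this is settled, the argument above is essentially immediate. An alternative route, more in line with the phrasing just before the lemma, would be to apply Theorem~\ref{th-Oikawa}(i) after restricting to the $2$-component sublink $K_{i}\cup K_{j}$, but showing that CF-equivalence descends to sublinks of a $\mu$-component virtual link requires essentially the same casework on chord types, so the direct Gauss-diagram argument above is cleaner.
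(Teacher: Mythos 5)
Your proof is correct, but it takes a different route from the paper. The paper gives no direct argument: it observes that $\mathrm{Lk}(K_{i},K_{j})-\mathrm{Lk}(K_{j},K_{i})=-2\,\mathrm{vlk}(K_{i},K_{j})$ and then simply declares the lemma to be ``obtained from Theorem~\ref{th-Oikawa}'', i.e.\ from Oikawa's classification of $2$-component virtual links, in which $\mathrm{vlk}$ appears as a classifying (hence CF-) invariant. You instead verify invariance chord-by-chord on the Gauss diagram: a CF-move negates the sign and reverses the orientation of each of the two chords involved while keeping their endpoints on the same circles, so a type-$(i,j)$ chord of sign $\e$ becomes a type-$(j,i)$ chord of sign $-\e$ and its contribution to the difference is unchanged. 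This is a sound and more self-contained argument; indeed it sidesteps a point the paper leaves implicit, namely how a statement proved for $2$-component links transfers to $\mu$-component ones (as you note, CF-equivalence does not obviously descend to sublinks, since a CF-move pairing one chord of the sublink with one chord outside it restricts to a single crossing change). Two small imprecisions, neither of which affects the argument: the CF-move does exchange the two adjacent endpoints rather than ``leaving the locations of all endpoints fixed'' (this swap is invisible to $\mathrm{Lk}$ since both endpoints lie adjacently on the same circle), and the alternative route you mention would need both parts of Theorem~\ref{th-Oikawa}, not only part (i), since $\mathrm{vlk}(K_{i},K_{j})$ may be an integer.
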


For $1\leq i\leq\mu$, the $i$th circle $C_{i}$ of $G$ is called \emph{odd} if the number of all endpoints of self-/nonself-chords on $C_{i}$ is an odd integer; 
otherwise it is called \emph{even}. 
Since Reidemeister moves and CF-moves do not change the parity of the number of endpoints on $C_{i}$, 
the parity of $C_{i}$ is preserved under CF-equivalence.
We emphasize that the $i$th component $K_{i}$ of $L$ is odd if and only if $C_{i}$ is odd, and that it is even if and only if $C_{i}$ is even. 

\begin{lemma}\label{lem-cc}
Let $G$ be a Gauss diagram with an odd circle $C$. 
Let $G'$ be a Gauss diagram obtained from $G$ by altering the sign and orientation of a nonself-chord $\gamma$ that is attached to $C$. 
Then $G$ and $G'$ are CF-equivalent. 
\end{lemma}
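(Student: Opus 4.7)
The plan is to work at the Gauss diagram level and construct an explicit sequence of Reidemeister moves and CF-moves realizing the desired sign-and-orientation flip of $\gamma$.

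First, I would invoke Remark~\ref{rem-Oik} to assume without loss of generality that $G$ has no self-chords on $C$, so that every endpoint on $C$ belongs to some nonself-chord. The oddness hypothesis then guarantees that $C$ carries an odd number of nonself-chord endpoints, including the endpoint of $\gamma$. Using Lemma~\ref{lem-HT}, I can freely slide $\gamma$'s endpoint on $C$ past any other endpoint at the cost of flipping the signs of both chords involved; this gives me flexibility in where $\gamma$'s endpoint sits and will be the basic atomic move throughout.

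Next, I would introduce a cancelling pair of auxiliary chords via an R2-move---two parallel nonself-chords of opposite signs, arranged near $\gamma$'s endpoint on $C$---and use a CF-move to fuse one of them with $\gamma$. The CF-move applied to $\gamma$ together with one member of the R2-pair flips the signs of both; by further shuffling endpoints via Lemma~\ref{lem-HT}, I can arrange the configuration so that after the CF-move a ``new $\gamma$'' emerges with its sign and orientation both reversed, while the remaining member of the R2-pair ends up parallel to another nonself-chord and is then cancellable by an R2-move. The intermediate HT-moves along $C$ will produce spurious sign flips on the other nonself-chords attached to $C$; the oddness of $C$ is precisely what forces those spurious flips to pair up and cancel, so that no chord other than $\gamma$ is ultimately altered.

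The main obstacle will be the bookkeeping: verifying that every chord other than $\gamma$ returns to its original sign, orientation, and position at the end of the sequence. I expect this to require a careful combinatorial accounting of the HT-moves and CF-moves used, together with a check that the parity of $C$ is genuinely what makes the cascade close up. In particular, on an even circle the analogous cascade would leave one chord stranded with its sign altered, so the argument should break in exactly the way one expects---this is where the hypothesis that $C$ is odd is indispensable.
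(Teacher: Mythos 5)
There is a genuine gap: the step you explicitly defer to ``bookkeeping'' is the entire content of the lemma, and the mechanism you propose for it does not close up as described. After an R2-move introduces a cancelling pair $\delta,\delta'$ of opposite signs and a CF-move is applied to $\gamma$ together with $\delta$, the sign of $\delta$ is reversed, so $\delta$ and $\delta'$ now carry the \emph{same} sign and no longer form an R2-cancellable pair; and cancelling the leftover auxiliary chord against ``another nonself-chord'' of $G$ would delete a chord of the original diagram and change the underlying link. Moreover, you assert that the oddness of $C$ makes the spurious sign flips pair up and cancel, but you never exhibit where the parity actually enters, so the proof is a plan rather than an argument. (The opening reduction via Remark~\ref{rem-Oik} is also both unnecessary and not free: removing the self-chords of $C$ is a CF-equivalence of $G$, but one must still check that the same sequence applied to $G'$ yields the reduced $G$ with only $\gamma$ altered, since the Lemma~\ref{lem-HT} moves used in that reduction themselves flip signs of nonself-chords.)

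The paper's proof supplies the missing mechanism with two full circuits of $C$. First, slide the endpoint of $\gamma$ once around $C$ by Lemma~\ref{lem-HT}: since $C$ is odd, $\gamma$ passes an \emph{even} number of other endpoints, so $\gamma$ returns with its original sign and orientation, while every other nonself-chord attached to $C$ (having exactly one endpoint on $C$) is flipped exactly once and every self-chord of $C$ (having two endpoints there) is flipped twice, hence restored. Second, add a free chord to $C$ by an R1-move, slide one of its endpoints once around $C$, flipping every nonself-chord attached to $C$ --- now including $\gamma$ --- exactly once, and delete the free chord by an R1-move. The net effect is that $\gamma$ alone is altered. This is the concrete cascade your sketch gestures at but does not construct; I recommend replacing the R2/CF fusion idea with this two-pass argument.
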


\begin{proof}
A CF-equivalence from $G$ to $G'$ is given as follows. 
First, we use Lemma~\ref{lem-HT} to slide the endpoint of $\gamma$ on $C$ along the circle $C$ until it returns to the original position. 
Since $C$ is odd, $\gamma$ encounters an even number of endpoints during this process. 
Therefore, $\gamma$ still has the same sign and orientation. 
On the other hand, the signs and orientations of the other nonself-chords that are attached to $C$ are altered. 
Note that those of all self-chords in $C$ are preserved. 
Next, we add a free chord in $C$ by an R1-move, and slide one endpoint of the free chord along $C$ until it is next to the other endpoint by Lemma~\ref{lem-HT}. 
Then the signs and orientations of all nonself-chords that are attached to $C$, including $\gamma$, are altered, and those of all self-chords in $C$ are preserved. 
Finally, we delete the free chord by an R1-move. 
The obtained Gauss diagram is $G'$. 
\end{proof}

\begin{proposition}\label{prop-NF-odd}
Any Gauss diagram $G$ of a $\mu$-component odd or almost odd virtual link $L=K_{1}\cup\cdots \cup K_{\mu}$ is CF-equivalent to $\#_{1\leq i<j\leq\mu}G_{ij}(b_{ij})$ for some $b_{ij}\in\mathbb{Z}$. 
Moreover, we have $\mathrm{Lk}(K_{i},K_{j})-\mathrm{Lk}(K_{j},K_{i})=b_{ij}$ for any $1\leq i<j\leq\mu$. 
\end{proposition}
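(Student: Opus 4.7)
The plan is to start from the normal form provided by Proposition~\ref{prop-NF} and then, on each pair of circles, collapse the two blocks of parallel nonself-chords into a single block by exploiting the odd circles via Lemma~\ref{lem-cc}. The crucial hypothesis is that, whether $L$ is odd or almost odd, at most one circle of $G$ is even; hence for every pair $1\leq i<j\leq\mu$ at least one of $C_{i},C_{j}$ is odd, and in particular every nonself-chord of $G$ has an endpoint on some odd circle.

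First, I would invoke Proposition~\ref{prop-NF} to replace $G$ up to CF-equivalence by $\#_{1\leq i<j\leq\mu}\left(G_{ij}(a_{ij})\# G_{ji}(a_{ji})\right)$ for some integers $a_{ij},a_{ji}$. Then I fix a pair $i<j$, let $C\in\{C_{i},C_{j}\}$ be an odd circle as guaranteed above, and apply Lemma~\ref{lem-cc} (using $C$) to each of the $|a_{ji}|$ parallel chords forming $G_{ji}(a_{ji})$. Each such chord is turned into a chord of type $(i,j)$ with the opposite sign, so after reorganizing the chords between $C_{i}$ and $C_{j}$ back into parallel position using Lemma~\ref{lem-HT}, the block $G_{ij}(a_{ij})\# G_{ji}(a_{ji})$ becomes a family of parallel $(i,j)$-chords whose signs sum to $a_{ij}-a_{ji}$. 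Adjacent pairs of opposite sign cancel by R2-moves, leaving $G_{ij}(b_{ij})$ with $b_{ij}:=a_{ij}-a_{ji}$. Performing this reduction pair by pair yields $G\sim \#_{1\leq i<j\leq\mu}G_{ij}(b_{ij})$.

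To identify $b_{ij}$ with the linking-number difference, I compute directly from the normal form $\#_{1\leq i<j\leq\mu}G_{ij}(b_{ij})$: its only $(i,j)$-chords come from the block $G_{ij}(b_{ij})$, so $\mathrm{Lk}(K_{i},K_{j})=b_{ij}$, whereas $\mathrm{Lk}(K_{j},K_{i})=0$ because no chord of type $(j,i)$ remains. Hence the difference is $b_{ij}$. Since $\mathrm{Lk}(K_{i},K_{j})-\mathrm{Lk}(K_{j},K_{i})$ is CF-invariant by Lemma~\ref{lem-Lk}, it is preserved from the original $G$, giving the claimed equality.

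The main technical hurdle is ensuring that Lemma~\ref{lem-cc} really is applicable to every nonself-chord one wants to reverse; this is precisely where the odd/almost odd hypothesis is used, because otherwise a nonself-chord between two even circles would resist the sign/orientation flip. A secondary bookkeeping point is that applying Lemma~\ref{lem-cc} successively perturbs the positions of endpoints on $C_{i}$ and $C_{j}$, so one must re-invoke Lemma~\ref{lem-HT} to restore the parallel arrangement before the final R2-cancellations; this is routine given the tools already established in Section~\ref{sec-Gauss}.
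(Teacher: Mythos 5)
Your proposal is correct and follows essentially the same route as the paper: reduce to the normal form of Proposition~\ref{prop-NF}, use Lemma~\ref{lem-cc} at an odd circle to convert every type-$(j,i)$ chord into a type-$(i,j)$ chord of opposite sign, cancel by R2-moves to get $G_{ij}(a_{ij}-a_{ji})$, and identify $b_{ij}$ via Lemma~\ref{lem-Lk}. The only superfluous step is your re-invocation of Lemma~\ref{lem-HT} to ``restore'' parallelism: the conclusion of Lemma~\ref{lem-cc} leaves all endpoints in place and only alters the sign and orientation of the chosen chord, so the chords remain parallel automatically.
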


\begin{proof}
Since the proofs in the cases of odd virtual links and almost odd virtual links are similar, we only consider the case where $L$ is almost odd. 
Without loss of generality, we may assume that the $\mu$th circle $C_{\mu}$ of $G$ is even and the other circles $C_{1},\ldots,C_{\mu-1}$ are odd. 

By Proposition~\ref{prop-NF}, $G$ is CF-equivalent to $\#_{1\leq i<j\leq\mu}\left(G_{ij}(a_{ij})\# G_{ji}(a_{ji})\right)$ for some $a_{ij}, a_{ji}\in\mathbb{Z}$. 
Since $C_{1},\ldots,C_{\mu-1}$ are odd, we apply Lemma~\ref{lem-cc} to nonself-chords that are attached to $C_{i}$ for $1\leq i\leq\mu-1$ in order to obtain $\#_{1\leq i<j\leq\mu}\left(G_{ij}(a_{ij})\# G_{ij}(-a_{ji})\right)$. 
Moreover, we obtain $\#_{1\leq i<j\leq\mu}G_{ij}(a_{ij}-a_{ji})$ from it by R2-moves. 
Therefore, $G$ is CF-equivalent to $\#_{1\leq i<j\leq\mu}G_{ij}(b_{ij})$ for $b_{ij}=a_{ij}-a_{ji}\in\mathbb{Z}$. 

Let $L'=K'_{1}\cup\cdots\cup K'_{\mu}$ be the $\mu$-component almost odd virtual link represented by $\#_{1\leq i<j\leq\mu}G_{ij}(b_{ij})$. 
Since the sum of signs of all nonself-chords of type~$(i,j)$ in $\#_{1\leq i<j\leq\mu}G_{ij}(b_{ij})$ is equal to $b_{ij}$ and that of type~$(j,i)$ is equal to zero,
we have $\mathrm{Lk}(K'_{i},K'_{j})-\mathrm{Lk}(K'_{j},K'_{i})=b_{ij}$ $(1\leq i<j\leq\mu)$. 
Since $L$ and $L'$ are CF-equivalent, Lemma~\ref{lem-Lk} implies that $\mathrm{Lk}(K_{i},K_{j})-\mathrm{Lk}(K_{j},K_{i})=b_{ij}$. 
\end{proof}

\begin{proof}[Proof of Theorem~\ref{th-odd}]
$\mathrm{(i)}\Rightarrow\mathrm{(ii)}$: 
This follows from Lemma~\ref{lem-Lk} directly. 

\noindent 
$\mathrm{(ii)}\Rightarrow\mathrm{(i)}$: 
By Proposition~\ref{prop-NF-odd}, any Gauss diagrams of $L$ and $L'$ are CF-equivalent to $\#_{1\leq i<j\leq\mu}G_{ij}(b_{ij})$ and $\#_{1\leq i<j\leq\mu}G_{ij}(b'_{ij})$ for some $b_{ij}, b'_{ij}\in\mathbb{Z}$, respectively. 
Moreover, we have $\mathrm{Lk}(K_{i},K_{j})-\mathrm{Lk}(K_{j},K_{i})=b_{ij}$ and $\mathrm{Lk}(K'_{i},K'_{j})-\mathrm{Lk}(K'_{j},K'_{i})=b'_{ij}$ $(1\leq i<j\leq\mu)$. 
Since $\mathrm{Lk}(K_{i},K_{j})-\mathrm{Lk}(K_{j},K_{i})=\mathrm{Lk}(K'_{i},K'_{j})-\mathrm{Lk}(K'_{j},K'_{i})$, we have $b_{ij}=b'_{ij}$, and therefore $\#_{1\leq i<j\leq\mu}G_{ij}(b_{ij})=\#_{1\leq i<j\leq\mu}G_{ij}(b'_{ij})$. 
\end{proof}

\section{Proof of Theorem~\ref{th-even}} 
Throughout this section, we consider a 3-component \emph{even} virtual link $L=K_{1}\cup K_{2}\cup K_{3}$ and its Gauss diagram $G$ with three circles $C_{1}$, $C_{2}$, and $C_{3}$. 

First we extend Oikawa's $n$-invariant given in~\cite[Definition 1.6]{Oik} to the 3-component even virtual link $L$ from the Gauss diagram point of view. 
We use the equivalence relation among the nonself-chords in $G$ connecting $C_{i}$ and $C_{j}$ $(1\leq i< j\leq3)$ defined in~\cite[Section~6]{MSW}. 
Let $\gamma$ and $\gamma'$ be nonself-chords connecting $C_{i}$ and $C_{j}$. 
The endpoints of $\gamma$ and $\gamma'$ on $C_{i}$ divide the circle $C_{i}$ into two arcs. 
Let $\alpha_{i}$ be one of the two arcs. 
Similarly, the endpoints of $\gamma$ and $\gamma'$ on $C_{j}$ divide $C_{j}$ into two arcs, and let $\alpha_{j}$ be one of the two arcs. 
See Figure~\ref{arcs-pair}. 
We say that $\gamma$ and $\gamma'$ are \emph{equivalent} if the number of all endpoints of self-/nonself-chords on $\alpha_{i}\cup\alpha_{j}$ is an even integer. 
In particular, $\gamma$ is equivalent to itself. 
Since $C_{i}$ and $C_{j}$ are even, the equivalence relation between $\gamma$ and $\gamma'$ does not depend on the choice of arcs $\alpha_{i}$ and $\alpha_{j}$. 

\begin{figure}[htb]
\centering
  \begin{overpic}[width=3cm]{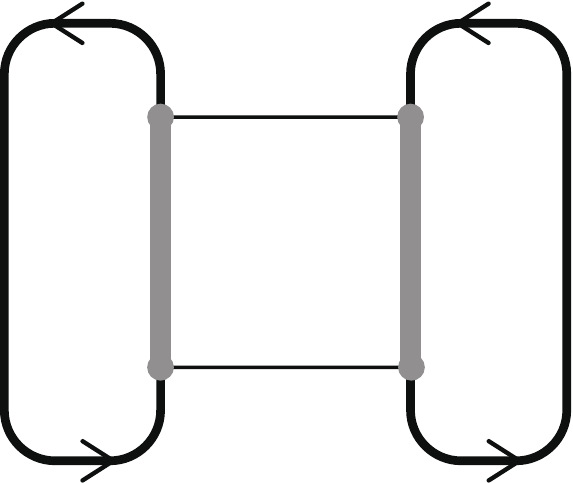}
    \put(10,32){$\alpha_{i}$}
    \put(65.5,32){$\alpha_{j}$}
    \put(-15,32){$C_{i}$}
    \put(89,32){$C_{j}$}
    \put(40,59){$\gamma$}
    \put(40,22){$\gamma'$}
  \end{overpic}
\caption{A pair of arcs $\alpha_{i}$ and $\alpha_{j}$ for nonself-chords $\gamma$ and $\gamma'$ connecting $C_{i}$ and $C_{j}$}
\label{arcs-pair}
\end{figure}

For $1\leq i< j\leq3$, we fix a nonself-chord $\gamma_{0}$ connecting $C_{i}$ and $C_{j}$. 
Let $\sigma(C_{i},C_{j};\gamma_{0})$ be the sum of signs of all nonself-chords connecting $C_{i}$ and $C_{j}$, which are equivalent to $\gamma_{0}$, including $\gamma_0$ itself. 
On the other hand, let $\overline{\sigma}(C_{i},C_{j};\gamma_{0})$ be the sum of signs of all nonself-chords connecting $C_{i}$ and $C_{j}$, which are not equivalent to $\gamma_{0}$. 
We remark that $\sigma(C_{i},C_{j};\gamma_{0})+\overline{\sigma}(C_{i},C_{j};\gamma_{0})=\mathrm{Lk}(K_{i},K_{j})+\mathrm{Lk}(K_{j},K_{i})$. 
Denote by $n(C_{i},C_{j};\gamma_{0})$ the absolute value of the difference $\sigma(C_{i},C_{j};\gamma_{0})-\overline{\sigma}(C_{i},C_{j};\gamma_{0})$. 
Note that $n(C_{i},C_{j};\gamma_{0})=n(C_{j},C_{i};\gamma_{0})$ by definition. 

\begin{example}\label{ex-n-inv}
Let $G$ be the Gauss diagram with three circles $C_{1},C_{2}$, and $C_{3}$ in Figure~\ref{ex-Gauss-even}, which represents a 3-component even virtual link. 
Let $\gamma_{0},\ldots,\gamma_{8}$ be the nonself-chords in $G$ as shown in the figure. 
Choose $\gamma_{0}$ as a fixed nonself-chord connecting $C_{1}$ and $C_{2}$.  
By definition, $\gamma_{0}$ and $\gamma_{1}$ are equivalent to $\gamma_{0}$. 
On the other hand, $\gamma_{2}, \gamma_{3}$, and $\gamma_{4}$ are not equivalent to $\gamma_{0}$. 
Therefore, we have $n(C_{1},C_{2};\gamma_{0})=3$. 
Similarly, choosing $\gamma_{5}$ as a fixed nonself-chord connecting $C_{1}$ and $C_{3}$, we have $n(C_{1},C_{3};\gamma_{5})=3$. 
Moreover, since $\gamma_{8}$ is the only one nonself-chord connecting $C_{2}$ and $C_{3}$, it follows that $n(C_{2},C_{3};\gamma_{8})=1$. 
\end{example}

\begin{figure}[htb]
\centering
\begin{overpic}[width=5.5cm]{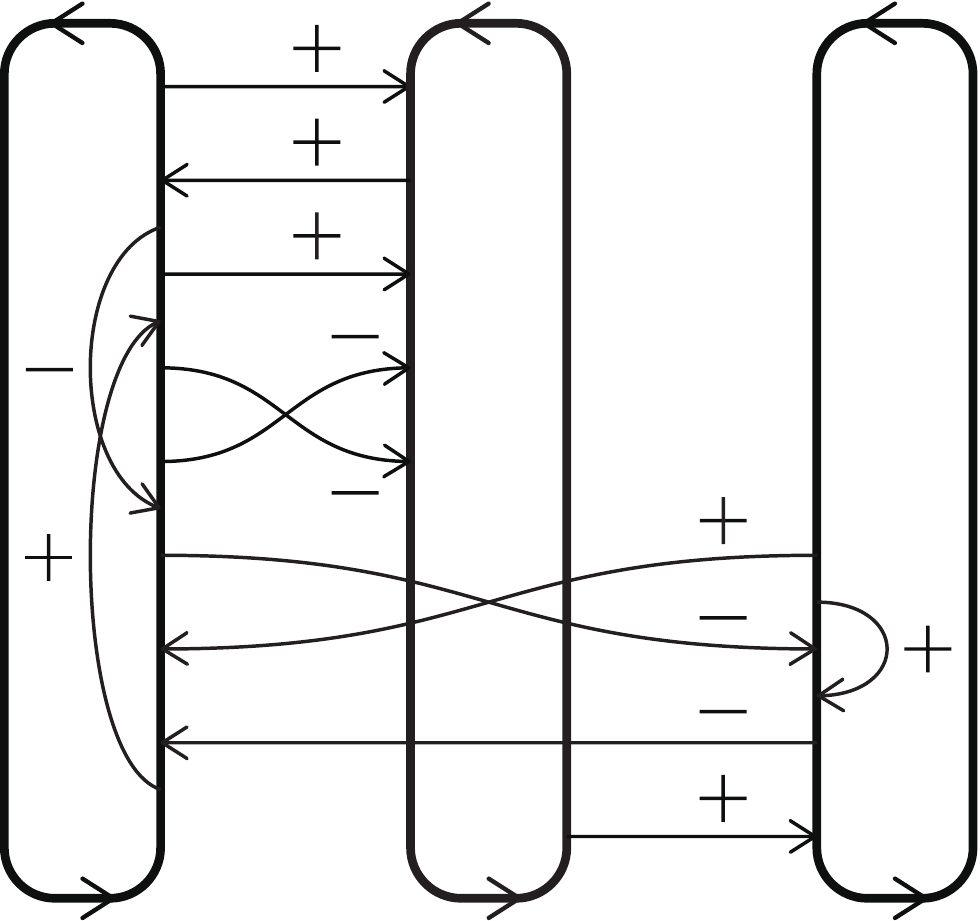}
  \put(-14,135){$C_{1}$}
  \put(95,135){$C_{2}$}
  \put(160,135){$C_{3}$}
  \put(31,138){$\gamma_{0}$}
  \put(31,123){$\gamma_{1}$}
  \put(31,108){$\gamma_{2}$}
  \put(28,92.5){$\gamma_{3}$}
  \put(28,79){$\gamma_{4}$}
  \put(31,63){$\gamma_{5}$}
  \put(31,48.5){$\gamma_{6}$}
  \put(31,33){$\gamma_{7}$}
  \put(97,18){$\gamma_{8}$}
\end{overpic}
\caption{A Gauss diagram representing a 3-component even virtual link}
\label{ex-Gauss-even}
\end{figure}

\begin{lemma}\label{lem-n-inv}
For any $1\leq i< j\leq3$, the nonnegative integer $n(C_{i},C_{j};\gamma_{0})$ is an invariant of $L$. 
Moreover, it is invariant under CF-moves. 
\end{lemma}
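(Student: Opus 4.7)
The plan is to prove the lemma in three stages: well-definedness of $n(C_{i},C_{j};\gamma_{0})$ with respect to the choice of $\gamma_{0}$, invariance under the Reidemeister moves R1--R3, and invariance under CF-moves.

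First I would observe that the equivalence relation partitions all nonself-chords between $C_{i}$ and $C_{j}$ into at most two classes (since both $C_{i}$ and $C_{j}$ are even, so parity of endpoints on an arc is well-defined). Replacing $\gamma_{0}$ by another nonself-chord $\gamma_{0}'$ either fixes the partition (when $\gamma_{0}\sim\gamma_{0}'$) or swaps the two classes (when $\gamma_{0}\not\sim\gamma_{0}'$). In the first case, $\sigma$ and $\overline{\sigma}$ are unchanged; in the second, they are interchanged. Either way $|\sigma-\overline{\sigma}|$ is unchanged, so $n(C_{i},C_{j};\gamma_{0})$ really depends only on the unordered pair of circles.

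Next I would verify invariance under each Reidemeister move. The key local input is that both R1 and R2 introduce pairs of chord endpoints that sit adjacently on the affected circle(s). Hence for any pair of nonself-chords $\gamma,\gamma'$ between $C_{i}$ and $C_{j}$, the parity of the number of endpoints on $\alpha_{i}\cup\alpha_{j}$ is unchanged: the new endpoints lie either both inside or both outside each such arc. Thus the equivalence relation is preserved. When R2 itself introduces two new nonself-chords between $C_{i}$ and $C_{j}$, those two chords lie in the same equivalence class (their small arcs carry no endpoints) and carry opposite signs, so their total contribution to $\sigma$ or $\overline{\sigma}$ is zero. For R3, all chord signs are preserved and one can check by direct inspection of the triangular local region that the pairwise equivalences among the three chords (and with any other chord) are preserved.

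Finally, for the CF-move I would analyze the local picture in Figure~\ref{CF-Gauss}. The move involves two chords whose signs change from $\e,\e'$ to $-\e,-\e'$ while their endpoints are rearranged in a small neighborhood. I would split into cases according to which circles these two chords connect (both between $C_{i}$ and $C_{j}$, exactly one between $C_{i}$ and $C_{j}$, or neither). In each case I would track how the rearrangement of endpoints affects the equivalence class with $\gamma_{0}$ of every nonself-chord between $C_{i}$ and $C_{j}$. The key observation I would aim at is that whenever one of the two participating chords undergoes a sign flip, its equivalence class with $\gamma_{0}$ simultaneously flips, so its contribution to $\sigma-\overline{\sigma}$ changes by an overall sign only; meanwhile the equivalence classes of all other chords are preserved by the same adjacency argument used for R1 and R2. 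Consequently $\sigma-\overline{\sigma}$ is preserved up to overall sign, and $n(C_{i},C_{j};\gamma_{0})$ is unchanged.

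The main obstacle is the CF-move step: the move admits several local configurations depending on the types and orientations of the two participating chords, and the compensating cancellation between the sign flip and the change of equivalence class must be verified in each one. Lemma~\ref{lem-HT} may be used to bring the CF-move into a convenient canonical position first, thereby reducing the number of subcases that actually need to be handled by hand.
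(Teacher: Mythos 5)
Your proposal is correct in substance, and it is worth noting that the paper does not actually write out a proof of this lemma: it only remarks that the argument is obtained from Proposition~3.1 and Lemma~3.2 of \cite{Oik} by reinterpreting them in terms of Gauss diagrams, which is precisely the direct verification you carry out. Your three-stage decomposition (base-point independence, Reidemeister invariance, CF-invariance) and the key cancellation in the CF-step --- a participating chord flips its sign and simultaneously flips its equivalence class with $\gamma_{0}$ --- are exactly the mechanisms that make the lemma true; the fact that the relation partitions the nonself-chords into at most two classes (so that replacing $\gamma_{0}$ at worst interchanges $\sigma$ and $\overline{\sigma}$) is the same observation used in \cite{MSW}. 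Two points deserve sharpening. First, for R3 the relevant observation is that each of the three chords of the triangle has \emph{both} of its endpoints involved in one of the three adjacent swaps, so the number of endpoints on any arc bounded by an endpoint of a participating chord changes by $\pm1\pm1\equiv0\pmod{2}$; this is why all pairwise equivalences survive. Second, your phrase that a participating chord's ``contribution to $\sigma-\overline{\sigma}$ changes by an overall sign only'' is slightly off: when $\gamma_{0}$ is not one of the two participating chords, the sign flip and the class flip cancel exactly, so each such contribution (and hence $\sigma-\overline{\sigma}$ itself) is preserved; the global sign ambiguity arises only when $\gamma_{0}$ itself takes part in the move, and is then absorbed by the absolute value together with the base-point independence established in your first stage. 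With these clarifications the argument is complete and supplies the details the paper leaves to the reference.
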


Since the proof is obtained from the proofs of Proposition~3.1 and Lemma~3.2 in~\cite{Oik} by interpreting in terms of Gauss diagrams, we omit it here. 
By Lemma~\ref{lem-n-inv}, the following definition is well-defined. 

\begin{definition}\label{def-n-inv}
The \emph{Oikawa invariant of $K_{i}$ and $K_{j}$ in $L$} is the nonnegative integer $n(C_{i},C_{j};\gamma_{0})$ for any nonself-chord $\gamma_{0}$ in $G$ connecting $C_{i}$ and $C_{j}$ $(1\leq i< j\leq3)$. 
It is denoted by $n(K_{i},K_{j})$. 
\end{definition}

\begin{remark}
Consider a Gauss diagram $G$ with three circles $C_{1},C_{2}$, and $C_{3}$ associated with a 3-component classical link diagram (i.e. a link diagram without virtual crossings). 
Fix a nonself-chord $\gamma_{0}$ in $G$ connecting $C_{i}$ and $C_{j}$ $(1\leq i< j\leq3)$. 
It can be seen that all nonself-chords connecting $C_{i}$ and $C_{j}$ are equivalent to $\gamma_{0}$. 
Therefore, if $L=K_{1}\cup K_{2}\cup K_{3}$ is a 3-component classical link (i.e. a link having a classical link diagram), then $n(K_{i},K_{j})=|\mathrm{Lk}(K_{i},K_{j})+\mathrm{Lk}(K_{j},K_{i})|=2|\mathrm{lk}(K_{i},K_{j})|$, 
where $\mathrm{lk}(K_{i},K_{j})$ denotes the classical linking number of $K_{i}$ and $K_{j}$. 
\end{remark}

Now we introduce two invariants $\tau_{0}(L)$ and $\tau_{1}(L)$ of $L$. 
Let $\gamma,\gamma'$, and $\gamma''$ be nonself-chords in $G$ connecting $C_{1}$ and $C_{2}$, $C_{1}$ and $C_{3}$, and $C_{2}$ and $C_{3}$, respectively. 
The endpoints of $\gamma$ and $\gamma'$ on $C_{1}$ divide the circle $C_{1}$ into two arcs. 
Let $\alpha_{1}$ be one of the two arcs. 
Similarly, the endpoints of $\gamma$ and $\gamma''$ on $C_{2}$ divide $C_{2}$ into two arcs, and let $\alpha_{2}$ be one of the two arcs. 
The endpoints of $\gamma'$ and $\gamma''$ on $C_{3}$ divide $C_{3}$ into two arcs, and let $\alpha_{3}$ be one of the two arcs. 
See Figure~\ref{arcs-triple}. 
A triple $(\gamma,\gamma',\gamma'')$ of the three nonself-chords is called \emph{even} if the number of all endpoints of self-/nonself-chords on $\alpha_{1}\cup\alpha_{2}\cup\alpha_{3}$ is an even integer; otherwise it is called \emph{odd}. 
Since $C_{1}, C_{2}$, and $C_{3}$ are even, the parity of $(\gamma,\gamma',\gamma'')$ does not depend on the choice of arcs $\alpha_{1},\alpha_{2}$, and $\alpha_{3}$. 

\begin{figure}[htb]
\centering
  \begin{overpic}[width=5.5cm]{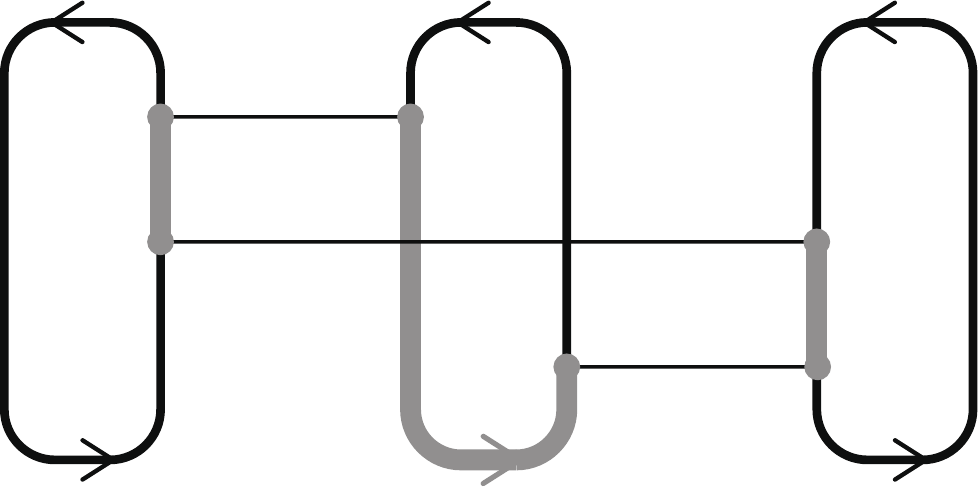}
    \put(-14,66){$C_{1}$}
    \put(95,66){$C_{2}$}
    \put(160,66){$C_{3}$}
    \put(42,64){$\gamma$}
    \put(42,44){$\gamma'$}
    \put(107,24){$\gamma''$}
    \put(11,48){$\alpha_{1}$}
    \put(51,27){$\alpha_{2}$}
    \put(136,27){$\alpha_{3}$}
  \end{overpic}
\caption{A triple of arcs $\alpha_{1}, \alpha_{2}$, and $\alpha_{3}$ for nonself-chords $\gamma,\gamma'$, and $\gamma''$}
\label{arcs-triple}
\end{figure}

Let $T_{0}(G)$ be the set of even triples $(\gamma,\gamma',\gamma'')$ in $G$, and $T_{1}(G)$ the set of odd triples $(\gamma,\gamma',\gamma'')$ in $G$. 
For $k=0,1$, denote by $\tau_{k}(G)$ the sum of signs of all triples in $T_{k}(G)$, where the \emph{sign} of a triple $(\gamma,\gamma',\gamma'')\in T_{k}(G)$ is the product of signs of $\gamma,\gamma'$, and $\gamma''$. 

\begin{example}
Consider the Gauss diagram $G$ given in Example~\ref{ex-n-inv}. 
By definition, we have 
\[
T_{0}(G)=\left\{
\begin{array}{lll}
(\gamma_{0},\gamma_{6},\gamma_{8}), 
(\gamma_{1},\gamma_{6},\gamma_{8}), 
(\gamma_{2},\gamma_{5},\gamma_{8}), 
(\gamma_{2},\gamma_{7},\gamma_{8}), \\
(\gamma_{3},\gamma_{5},\gamma_{8}), 
(\gamma_{3},\gamma_{7},\gamma_{8}), 
(\gamma_{4},\gamma_{5},\gamma_{8}), 
(\gamma_{4},\gamma_{7},\gamma_{8}) 
\end{array}
\right\}
\]
and 
\[
T_{1}(G)=\left\{
\begin{array}{lll}
(\gamma_{0},\gamma_{5},\gamma_{8}), 
(\gamma_{0},\gamma_{7},\gamma_{8}), 
(\gamma_{1},\gamma_{5},\gamma_{8}), 
(\gamma_{1},\gamma_{7},\gamma_{8}), \\
(\gamma_{2},\gamma_{6},\gamma_{8}), 
(\gamma_{3},\gamma_{6},\gamma_{8}), 
(\gamma_{4},\gamma_{6},\gamma_{8}) 
\end{array}
\right\}.
\]
Therefore, it follows that 
$\tau_{0}(G)=4$ and $\tau_{1}(G)=-5$. 
\end{example}

\begin{lemma}\label{lem-tau-inv}
The integers $\tau_{0}(G)$ and $\tau_{1}(G)$ are invariants of $L$. 
Moreover, the difference $\tau_{0}(G)-\tau_{1}(G)$ is invariant under CF-moves. 
\end{lemma}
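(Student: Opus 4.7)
The plan is to verify, under each of R1, R2, R3, and the CF-move, how $\tau_0(G)$, $\tau_1(G)$ (respectively $\tau_0(G)-\tau_1(G)$) change by partitioning the contributing triples according to their overlap with the chords altered by the move. This mirrors the proof of Lemma~\ref{lem-n-inv} and Oikawa's original argument for the $n$-invariant.

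For Reidemeister invariance of $\tau_k(G)$: An R1 move involves a self-chord, which appears in no triple, and its two adjacent endpoints add $0$ or $2$ endpoints to any arc, so every triple's parity is preserved. An R2 move introduces two chords $\gamma_1, \gamma_2$ of opposite signs. Triples disjoint from them keep their parities (each chord contributes adjacent endpoint pairs on each circle it meets). The pairs $(\gamma_1, \gamma', \gamma'')$ and $(\gamma_2, \gamma', \gamma'')$ share parity (their arcs differ by a single endpoint on each of two circles, contributing $1+1\equiv 0 \pmod{2}$) and carry opposite signs, so they cancel in $\tau_k$. An R3 move changes no signs and moves three chords' endpoints within a local region; triples disjoint from those chords are unaffected, while triples meeting one, two, or all three of the R3-chords match bijectively between before and after, with signs and parities preserved.

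For CF-invariance of $\tau_0(G)-\tau_1(G)$, write
\[
\tau_0(G)-\tau_1(G)=\sum_{(\gamma,\gamma',\gamma'')} (-1)^{p(\gamma,\gamma',\gamma'')}\,\mathrm{sgn}(\gamma)\,\mathrm{sgn}(\gamma')\,\mathrm{sgn}(\gamma''),
\]
where $p\in\{0,1\}$ is the parity of the triple. A CF-move flips the signs of two chords $\gamma_1,\gamma_2$ from $(\e,\e')$ to $(-\e,-\e')$ and swaps two of their endpoints, which are adjacent on a common circle $C$; crucially, the underlying set of endpoint positions on each circle is unchanged. Classify triples by how many of $\gamma_1, \gamma_2$ they contain. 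Triples containing neither keep both sign and parity. Triples containing both keep sign (two flips cancel) and parity (the arc on $C$ has the same boundary set $\{a,b\}$, while the other two arcs are untouched). Triples containing exactly one, say $\gamma_1$, flip sign (with $\gamma_1$'s flip) and also flip parity: the arc boundary on $C$ shifts from $a$ to the adjacent position $b$, and the evenness of $C$ forces the interior endpoint count to change parity by $1$. Hence $(-1)^{p}\cdot(\mathrm{sgn}(\gamma)\mathrm{sgn}(\gamma')\mathrm{sgn}(\gamma''))$ is preserved in every class, and $\tau_0(G)-\tau_1(G)$ is CF-invariant.

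The main obstacle is expected to be the R3 case, which splits into several variants depending on chord signs, orientations, and which of $C_1,C_2,C_3$ each of the three R3-chords meets; the bookkeeping, though finite, is substantial. A cleaner route is to invoke the reduction of all R3 variants to a single standard R3 modulo R1 and R2 (cf.~\cite{P}), so that only the standard R3 requires direct verification.
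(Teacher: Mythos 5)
Your proof is correct and takes essentially the same route as the paper: a case-by-case check of R1, R2, R3, and the CF-move, partitioning triples according to which of the affected chords they contain, with the R2 contributions cancelling in sign-pairs and the CF-move simultaneously flipping sign and parity of the triples meeting exactly one of the two chords. The paper likewise reduces to the single R3 variant of Figure~\ref{Rmoves-Gauss} (via \cite{P}) and treats the remaining cases more tersely; your explicit handling of a triple containing \emph{both} CF-chords (sign and parity both preserved) fills in a case the paper leaves implicit.
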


\begin{proof}
Let $G'$ be a Gauss diagram obtained from $G$ by one of Reidemeister moves and CF-moves. 

For an R1-move or R3-move, we have $T_{k}(G)=T_{k}(G')$ $(k=0,1)$. 
This implies that $\tau_{k}(G)=\tau_{k}(G')$. 

For an R2-move, which adds a pair of chords $\gamma$ and $\gamma'$ with opposite signs, if $\gamma$ and $\gamma'$ are self-chords, then $T_{k}(G)=T_{k}(G')$, and therefore $\tau_{k}(G)=\tau_{k}(G')$ $(k=0,1)$. 
If $\gamma$ and $\gamma'$ are nonself-chords, 
then $T_{k}(G')=T_{k}(G)\cup T_{k}(G';\gamma,\gamma')$ $(k=0,1)$, where $T_{0}(G';\gamma,\gamma')$ denotes the set of even triples in $G'$ that contain $\gamma$ or $\gamma'$, and where $T_{1}(G';\gamma,\gamma')$ denotes the set of odd triples in $G'$ that contain $\gamma$ or $\gamma'$. 
Since the sum of signs of all triples in $T_{k}(G';\gamma,\gamma')$ is zero, we have $\tau_{k}(G)=\tau_{k}(G')$ $(k=0,1)$. 

For a CF-move, which exchanges two consecutive initial/terminal endpoints of a pair of chords $\gamma$ and $\gamma'$, if $\gamma$ and $\gamma'$ are self-chords, then $T_{k}(G)=T_{k}(G')$, and therefore $\tau_{k}(G)=\tau_{k}(G')$ $(k=0,1)$. 
If at least one of $\gamma$ and $\gamma'$ is a nonself-chord, then 
for a triple $t\in T_{k}(G)$ that contains $\gamma$ or $\gamma'$, it follows that $t\in T_{\widetilde{k}}(G')$ $(\widetilde{0}=1, \widetilde{1}=0)$ and the sign of $t$ is altered. 
For a triple $t\in T_{k}(G)$ that does not contain $\gamma$ and $\gamma'$, it follows that $t\in T_{k}(G')$ and the sign of $t$ is preserved $(k=0,1)$. 
Hence, we have $\tau_{0}(G)-\tau_{1}(G)=\tau_{0}(G')-\tau_{1}(G')$. 
\end{proof}

Lemma~\ref{lem-tau-inv} guarantees the well-definedness of the following definition. 

\begin{definition}\label{def-tau-inv}
The \emph{$\tau_{k}$-invariant} of $L$ is the integer $\tau_{k}(G)$ for any Gauss diagram $G$ of $L$ $(k=0,1)$. 
It is denoted by $\tau_{k}(L)$.
\end{definition}

\begin{remark}
Consider a Gauss diagram $G$ with three circles $C_{1},C_{2}$, and $C_{3}$ associated with a 3-component classical link diagram. 
It can be seen that for any three nonself-chords $\gamma, \gamma'$, and $\gamma''$ in $G$ connecting $C_{1}$ and $C_{2}$, $C_{1}$ and $C_{3}$, and $C_{2}$ and $C_{3}$, respectively, their triple $(\gamma,\gamma',\gamma'')$ is even. 
Therefore, if $L$ is classical, then $\tau_{0}(L)=(\mathrm{Lk}(K_{1},K_{2})+\mathrm{Lk}(K_{2},K_{1}))(\mathrm{Lk}(K_{1},K_{3})+\mathrm{Lk}(K_{3},K_{1}))(\mathrm{Lk}(K_{2},K_{3})+\mathrm{Lk}(K_{3},K_{2}))=8\,\mathrm{lk}(K_{1},K_{2})\mathrm{lk}(K_{1},K_{3})\mathrm{lk}(K_{2},K_{3})$ and $\tau_{1}(L)=0$. 
\end{remark}

For convenience of explanation, let $G(a_{12},a_{21};a_{13},a_{31};a_{23},a_{32})$ denote the Gauss diagram $\displaystyle\#_{1\leq i<j\leq3}(G_{ij}(a_{ij})\# G_{ji}(a_{ji}))$ in the case $\mu=3$ given in Proposition~\ref{prop-NF}. 

\begin{lemma}\label{lem-iff-even}
Let $L=K_{1}\cup K_{2}\cup K_{3}$ be the $3$-component virtual link represented by $G(a_{12},a_{21};a_{13},a_{31};a_{23},a_{32})$ for some $a_{ij}\in\mathbb{Z}$. 
Then $L$ is even if and only if 
\[
a_{12}+a_{21}\equiv a_{13}+a_{31}\equiv a_{23}+a_{32}\pmod{2}. 
\]
\end{lemma}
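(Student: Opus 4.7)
The plan is to translate the evenness condition on $L$ into a parity condition on the number of chord endpoints on each circle of the normal-form Gauss diagram $G(a_{12},a_{21};a_{13},a_{31};a_{23},a_{32})$, and then simplify that system of three congruences. Recall from Section~\ref{sec-odd} that the $i$th component $K_i$ of $L$ is even precisely when the circle $C_i$ carries an even number of chord endpoints, so $L$ is even if and only if all three circles $C_1,C_2,C_3$ have evenly many endpoints.

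The diagram $G(a_{12},a_{21};a_{13},a_{31};a_{23},a_{32})$ is a connected sum whose chords are all nonself-chords. Specifically, for each pair $i<j$ it contains exactly $|a_{ij}|+|a_{ji}|$ nonself-chords connecting $C_i$ and $C_j$, each contributing one endpoint to $C_i$ and one to $C_j$. Counting endpoints circle by circle therefore gives
\begin{align*}
\#\text{endpoints on }C_1 &= |a_{12}|+|a_{21}|+|a_{13}|+|a_{31}|,\\
\#\text{endpoints on }C_2 &= |a_{12}|+|a_{21}|+|a_{23}|+|a_{32}|,\\
\#\text{endpoints on }C_3 &= |a_{13}|+|a_{31}|+|a_{23}|+|a_{32}|.
\end{align*}

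Since $|a|\equiv a\pmod{2}$ for every integer $a$, the evenness of $L$ is equivalent to the three congruences
\[
a_{12}+a_{21}+a_{13}+a_{31}\equiv a_{12}+a_{21}+a_{23}+a_{32}\equiv a_{13}+a_{31}+a_{23}+a_{32}\equiv 0\pmod{2}.
\]
Subtracting these pairwise shows that this system is equivalent to
\[
a_{12}+a_{21}\equiv a_{13}+a_{31}\equiv a_{23}+a_{32}\pmod{2},
\]
and conversely, any two of the equalities displayed above force the sum of any two of $a_{12}+a_{21}$, $a_{13}+a_{31}$, $a_{23}+a_{32}$ to be even, which is precisely the three endpoint-parity congruences. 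This yields the claimed equivalence.

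There is no real obstacle here; the only point requiring a moment of care is making sure that the endpoint count is correct in the normal form (the absence of self-chords is built into $G(a_{12},a_{21};a_{13},a_{31};a_{23},a_{32})$, so nothing extra contributes to the parities) and that the three pairwise congruences really are equivalent to the single chain of congruences stated in the lemma.
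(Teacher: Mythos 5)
Your proof is correct and follows essentially the same route as the paper: count the chord endpoints on each circle of the normal form, reduce modulo $2$, and observe that the three resulting parity conditions collapse to the stated chain of congruences. The only cosmetic difference is that the paper dispenses with the third circle by noting the total endpoint count is even, while you verify all three congruences and note their redundancy directly.
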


\begin{proof}
Since the number of endpoints on $C_{1}$ is equal to $|a_{12}|+|a_{21}|+|a_{13}|+|a_{31}|$, $C_{1}$ is even if and only if $a_{12}+a_{21}\equiv a_{13}+a_{31}\pmod{2}$. 
Similarly, $C_{2}$ is even if and only if $a_{12}+a_{21}\equiv a_{23}+a_{32}\pmod{2}$. 
Moreover, since the number of endpoints on all circles $C_{1}, C_{2}$, and $C_{3}$ is an even integer, if $C_{1}$ and $C_{2}$ are even, then $C_{3}$ is also even. 
Therefore, we have the conclusion. 
\end{proof}

\begin{lemma}\label{lem-value}
Let $L=K_{1}\cup K_{2}\cup K_{3}$ be the $3$-component virtual link represented by $G(a_{12},a_{21};a_{13},a_{31};a_{23},a_{32})$ for some $a_{ij}\in\mathbb{Z}$. 
If $L$ is even, then we have
$\mathrm{Lk}(K_{i},K_{j})-\mathrm{Lk}(K_{j},K_{i})=a_{ij}-a_{ji}$ and $n(K_{i},K_{j})=|a_{ij}+a_{ji}|$ for any $1\leq i<j\leq3$, and 
$\tau_{0}(L)-\tau_{1}(L)=(a_{12}+a_{21})(a_{13}+a_{31})(a_{23}+a_{32})$. 
\end{lemma}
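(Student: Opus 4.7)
The plan is to compute all three quantities directly from the explicit structure of the canonical Gauss diagram $G:=G(a_{12},a_{21};a_{13},a_{31};a_{23},a_{32})$. The linking-number equality is immediate: the $(i,j)$-chords of $G$ are precisely the $|a_{ij}|$ parallel chords coming from the factor $G_{ij}(a_{ij})$, each of sign $\e_{ij}$, so $\mathrm{Lk}(K_i,K_j)=\e_{ij}|a_{ij}|=a_{ij}$, and the claim follows by subtraction. For the $n$-invariant, I would show that every two nonself-chords connecting $C_i$ and $C_j$---all of which come from the single bundle $G_{ij}(a_{ij})\#G_{ji}(a_{ji})$---are equivalent. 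The key combinatorial input is that this bundle is a ``parallel-horizontal'' tangle: its endpoints occupy one contiguous cyclic arc on $C_i$ and one on $C_j$, with a single bundle chord at cyclic position $p$ on $C_i$ sitting at cyclic position $q$ on $C_j$ satisfying $p+q=|a_{ij}|+|a_{ji}|+1$. Choosing $\alpha_i,\alpha_j$ to lie inside the bundle, if $k$ other bundle chords lie strictly between $\gamma$ and $\gamma'$ then each of the two arcs contains exactly $k$ other endpoints, a total of $2k$---even. Hence all such chords are equivalent, $\sigma(C_i,C_j;\gamma_0)=a_{ij}+a_{ji}$, $\overline{\sigma}(C_i,C_j;\gamma_0)=0$, and $n(K_i,K_j)=|a_{ij}+a_{ji}|$.

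The third identity is the substantial one. I would prove that \emph{every} triple $(\gamma,\gamma',\gamma'')$ in $G$ is even; granted this, $\tau_1(L)=0$ and the triple sum defining $\tau_0(L)$ factors as the product of per-bundle sign-sums, giving $(a_{12}+a_{21})(a_{13}+a_{31})(a_{23}+a_{32})$. To verify the evenness claim I would parametrize $\gamma,\gamma',\gamma''$ by their positions within their respective bundles, apply the relation $p+q=|a_{ij}|+|a_{ji}|+1$ for each of the three bundles, and choose explicit counterclockwise arcs $\alpha_1,\alpha_2,\alpha_3$ each crossing one of the two boundaries between the two bundles on its circle. Summing the three arc-lengths and substituting the three bundle relations, the total collapses modulo $2$ to $|a_{12}|+|a_{21}|+|a_{23}|+|a_{32}|$, which is even because $|a_{12}|+|a_{21}|\equiv|a_{23}|+|a_{32}|\pmod 2$ by the even hypothesis (Lemma~\ref{lem-iff-even}).

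The main obstacle is pinning down the cyclic order of endpoints on each $C_\ell$ of the canonical form---specifically that the endpoints of the two bundles attached to $C_\ell$ form two disjoint contiguous cyclic arcs, and that each bundle satisfies the parallel identity $p+q=|a_{ij}|+|a_{ji}|+1$. Both facts follow from a careful unwinding of the connected-sum construction (tracking how the surgery arcs attach), but this bookkeeping is the delicate part of the argument; once it is in hand, the rest is a routine parity count.
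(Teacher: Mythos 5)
Your proposal is correct and follows essentially the same route as the paper: compute all three quantities directly on the normal form $G(a_{12},a_{21};a_{13},a_{31};a_{23},a_{32})$, observing that all nonself-chords joining $C_i$ and $C_j$ are mutually equivalent and that every triple $(\gamma,\gamma',\gamma'')$ is even, so that $\overline{\sigma}=0$, $\tau_{1}=0$, and the triple sum factors. The only difference is one of detail: the paper asserts these two combinatorial facts ``by definition,'' whereas you carry out the parity bookkeeping explicitly and correctly note that the evenness hypothesis (via Lemma~\ref{lem-iff-even}) is genuinely needed for the triple-parity claim --- a point the paper's proof leaves implicit.
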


\begin{proof}
Since the sum of signs of all nonself-chords of type~$(i,j)$ is equal to $a_{ij}$ and that of type~$(j,i)$ is equal to $a_{ji}$, we have $\mathrm{Lk}(K_{i},K_{j})-\mathrm{Lk}(K_{j},K_{i})=a_{ij}-a_{ji}$ $(1\leq i<j\leq3)$. 

Choose and fix a nonself-chord $\gamma_{0}$ connecting $C_{i}$ and $C_{j}$ $(1\leq i<j\leq3)$. 
By definition, all nonself-chords connecting $C_{i}$ and $C_{j}$ are equivalent to $\gamma_{0}$. 
The sum of their signs is equal to $a_{ij}+a_{ji}$. 
Therefore, since $\sigma(C_{i},C_{j};\gamma_{0})=a_{ij}+a_{ji}$ and $\overline{\sigma}(C_{i},C_{j};\gamma_{0})=0$, 
it follows that $n(K_{i},K_{j})=|a_{ij}+a_{ji}|$. 

For any three nonself-chords $\gamma, \gamma'$, and $\gamma''$ in $G=G(a_{12},a_{21};a_{13},a_{31};a_{23},a_{32})$ connecting $C_{1}$ and $C_{2}$, $C_{1}$ and $C_{3}$, and $C_{2}$ and $C_{3}$, respectively, their triple $(\gamma,\gamma',\gamma'')$ is even by definition. 
Therefore, we have $\tau_{0}(G)=(a_{12}+a_{21})(a_{13}+a_{31})(a_{23}+a_{32})$ and $\tau_{1}(G)=0$, which implies that $\tau_{0}(L)-\tau_{1}(L)=(a_{12}+a_{21})(a_{13}+a_{31})(a_{23}+a_{32})$. 
\end{proof}

\begin{lemma}\label{lem-equiv-Gauss}
We have the following CF-equivalent Gauss diagrams of $3$-component virtual links. 
\begin{enumerate}
\item $G(a_{12},a_{21};a_{13},a_{31};a_{23},a_{32})\sim G(-a_{21},-a_{12};-a_{31},-a_{13};a_{23},a_{32})$. 
\item $G(a_{12},a_{21};a_{13},a_{31};a_{23},a_{32})\sim G(-a_{21},-a_{12};a_{13},a_{31};-a_{32},-a_{23})$. 
\item $G(a_{12},a_{21};a_{13},a_{31};a_{23},a_{32}) \sim G(a_{12},a_{21};-a_{31},-a_{13};-a_{32},-a_{23})$. 
\end{enumerate}
\end{lemma}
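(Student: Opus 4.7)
The plan is to prove (i) by applying a single CF-equivalent deformation that flips the sign and orientation of every nonself-chord attached to $C_1$, and then to restore the normal-form arrangement of Proposition~\ref{prop-NF}. Parts (ii) and (iii) follow the same template with $C_1$ replaced by $C_2$ and $C_3$ respectively, so I would only write out (i) in detail and remark that the remaining cases are analogous.

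The key technique I would use is the ``free-chord construction'' from the second half of the proof of Lemma~\ref{lem-cc}: add a free chord on $C_1$ by an R1-move, slide one of its endpoints once around $C_1$ using Lemma~\ref{lem-HT}, and then delete the free chord by an R1-move. Since $G(a_{12},a_{21};a_{13},a_{31};a_{23},a_{32})$ has no self-chords, this CF-equivalence flips the sign and orientation of every nonself-chord attached to $C_1$, while leaving all chords not attached to $C_1$ (and all endpoint positions of such chords) untouched. Consequently the $|a_{12}|$ chords of type $(1,2)$ with sign $\e_{12}$ become $|a_{12}|$ chords of type $(2,1)$ with sign $-\e_{12}$, contributing $-a_{12}$ to the new type-$(2,1)$ signed total; the other three bundles attached to $C_1$ are swapped-and-negated analogously, while the $(2,3)$- and $(3,2)$-bundles are unchanged. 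The resulting signed totals are $a'_{12}=-a_{21}$, $a'_{21}=-a_{12}$, $a'_{13}=-a_{31}$, $a'_{31}=-a_{13}$, $a'_{23}=a_{23}$, and $a'_{32}=a_{32}$, matching the chord data of $G(-a_{21},-a_{12};-a_{31},-a_{13};a_{23},a_{32})$.

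Next I would restore the precise normal-form arrangement of Proposition~\ref{prop-NF}: inside the $C_1$-$C_2$ band the newly-retyped $(1,2)$-chords sit below the newly-retyped $(2,1)$-chords, opposite to the Proposition~\ref{prop-NF} ordering, and similarly inside the $C_1$-$C_3$ band. To transpose one such adjacent pair I would apply Lemma~\ref{lem-HT} twice, once on the pair's endpoints on $C_1$ and once on its endpoints on $C_2$, so that the two sign-and-orientation flips cancel out and only the vertical positions get exchanged. Bubble-sorting in this way within both bands containing $C_1$ yields $G(-a_{21},-a_{12};-a_{31},-a_{13};a_{23},a_{32})$ exactly.

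The main difficulty I anticipate is purely in the sign-and-orientation bookkeeping: confirming that a single circuit of the free chord around $C_1$ flips precisely the nonself-chords meeting $C_1$ (and nothing else, which uses that there are no self-chords in the starting normal form), and that the ``double-swap'' across both circles bounding a band genuinely cancels the two sign changes produced by Lemma~\ref{lem-HT}. Both verifications are short direct checks once Lemma~\ref{lem-HT} and Remark~\ref{rem-Oik} are in hand, but they are where any sign error would slip in, so I would carry them out explicitly before declaring the three parts proved.
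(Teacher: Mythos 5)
Your proposal is correct and matches the paper's own argument essentially step for step: the paper likewise proves only (i), adds a free chord on $C_{1}$ by an R1-move, slides its terminal endpoint once around $C_{1}$ via Lemma~\ref{lem-HT} to flip the sign and orientation of every nonself-chord meeting $C_{1}$, deletes the free chord, and then uses Lemma~\ref{lem-HT} again to exchange the positions of the type-$(i,j)$ and type-$(j,i)$ bundles for $(i,j)=(1,2),(1,3)$. Your explicit ``double-swap'' justification that the final reordering preserves signs is exactly the bookkeeping the paper leaves to its figure.
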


\begin{proof}
Since the proofs of (i), (ii), and (iii) are similar, we only show (i). 
Figure~\ref{pf-lem-equiv-Gauss} indicates the proof. 
More precisely, (2) is obtained from (1) by an R1-move adding a free chord $\gamma$ in $C_{1}$. 
Applying Lemma~\ref{lem-HT}, we slide the terminal endpoint of $\gamma$ along $C_{1}$ with respect to the orientation of $C_{1}$ until it is next to the initial endpoint. 
Then (3) is obtained from (2). 
We delete $\gamma$ by an R1-move, and use Lemma~\ref{lem-HT} to exchange the positions of the nonself-chords of type~$(i,j)$ and those of type~$(j,i)$ for $(i,j)=(1,2), (1,3)$. 
Finally (4) is obtained from (3). 
\end{proof}

\begin{figure}[htb]
\centering
\vspace{1em}
  \begin{overpic}[width=12cm]{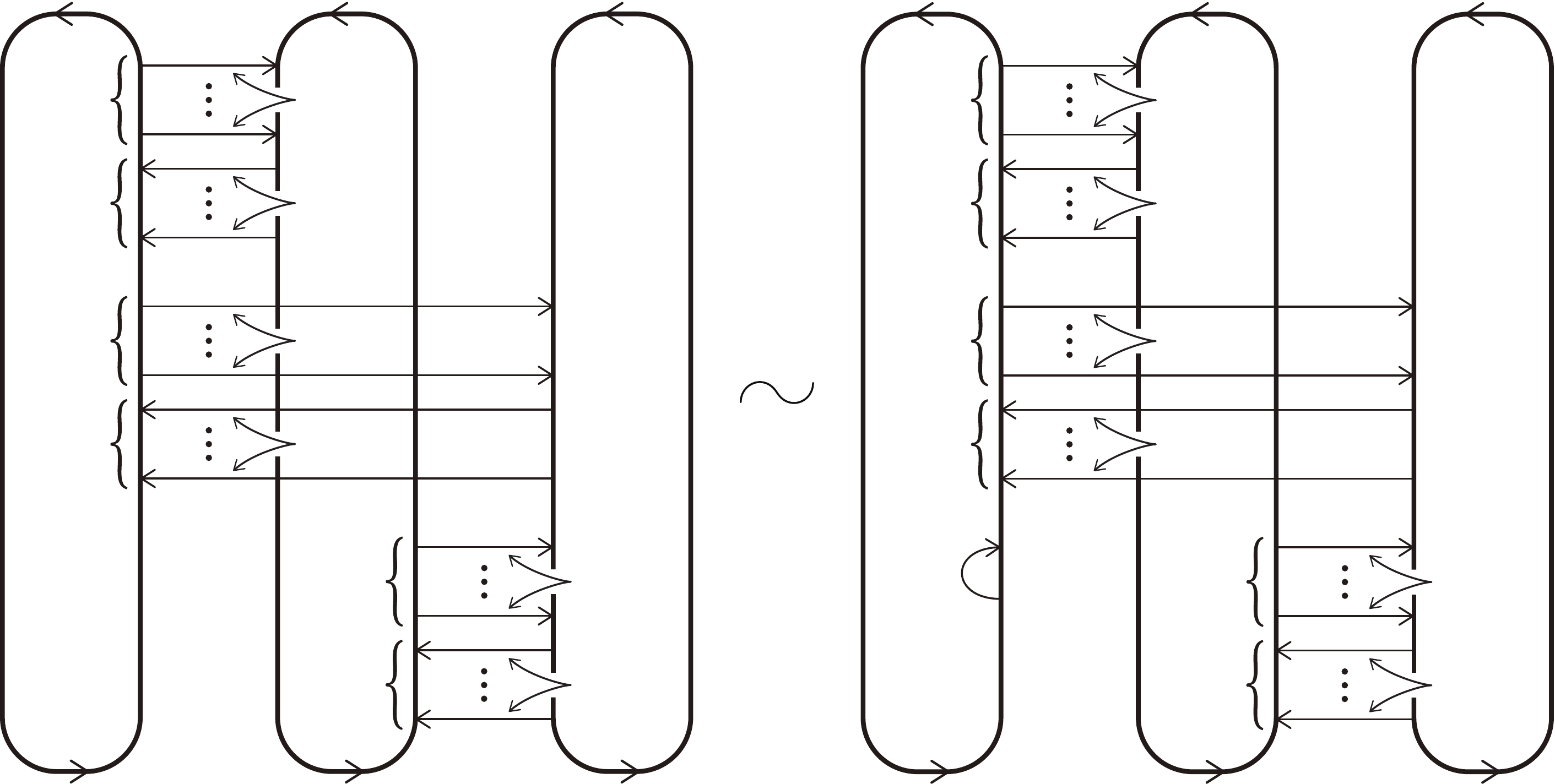}
    \put(8,-14){(1) $G(a_{12},a_{21};a_{13},a_{31};a_{23},a_{32})$}
    \put(258.5,-14){(2)}
    \put(164.5,92){R1}
    \put(10,176){$C_{1}$}
    \put(71,176){$C_{2}$}
    \put(131,176){$C_{3}$}
    \put(4,148){$|a_{12}|$}
    \put(4,125){$|a_{21}|$}
    \put(4,95){$|a_{13}|$}
    \put(4,72){$|a_{31}|$}
    \put(68,148){$\e_{12}$}
    \put(68,125){$\e_{21}$}
    \put(68,95){$\e_{13}$}
    \put(68,72){$\e_{31}$}
    \put(64.5,42){$|a_{23}|$}
    \put(64.5,19){$|a_{32}|$}
    \put(128.5,42){$\e_{23}$}
    \put(128.5,19){$\e_{32}$}
    
    
    \put(199,176){$C_{1}$}
    \put(259,176){$C_{2}$}
    \put(319,176){$C_{3}$}
    \put(202,45){$\gamma$}
    \put(193,148){$|a_{12}|$}
    \put(193,125){$|a_{21}|$}
    \put(193,95){$|a_{13}|$}
    \put(193,72){$|a_{31}|$}
    \put(257,148){$\e_{12}$}
    \put(257,125){$\e_{21}$}
    \put(257,95){$\e_{13}$}
    \put(257,72){$\e_{31}$}
    \put(253.5,42){$|a_{23}|$}
    \put(253.5,19){$|a_{32}|$}
    \put(317.5,42){$\e_{23}$}
    \put(317.5,19){$\e_{32}$}
  \end{overpic}

\vspace{2.5em}
\centering
  \begin{overpic}[width=12cm]{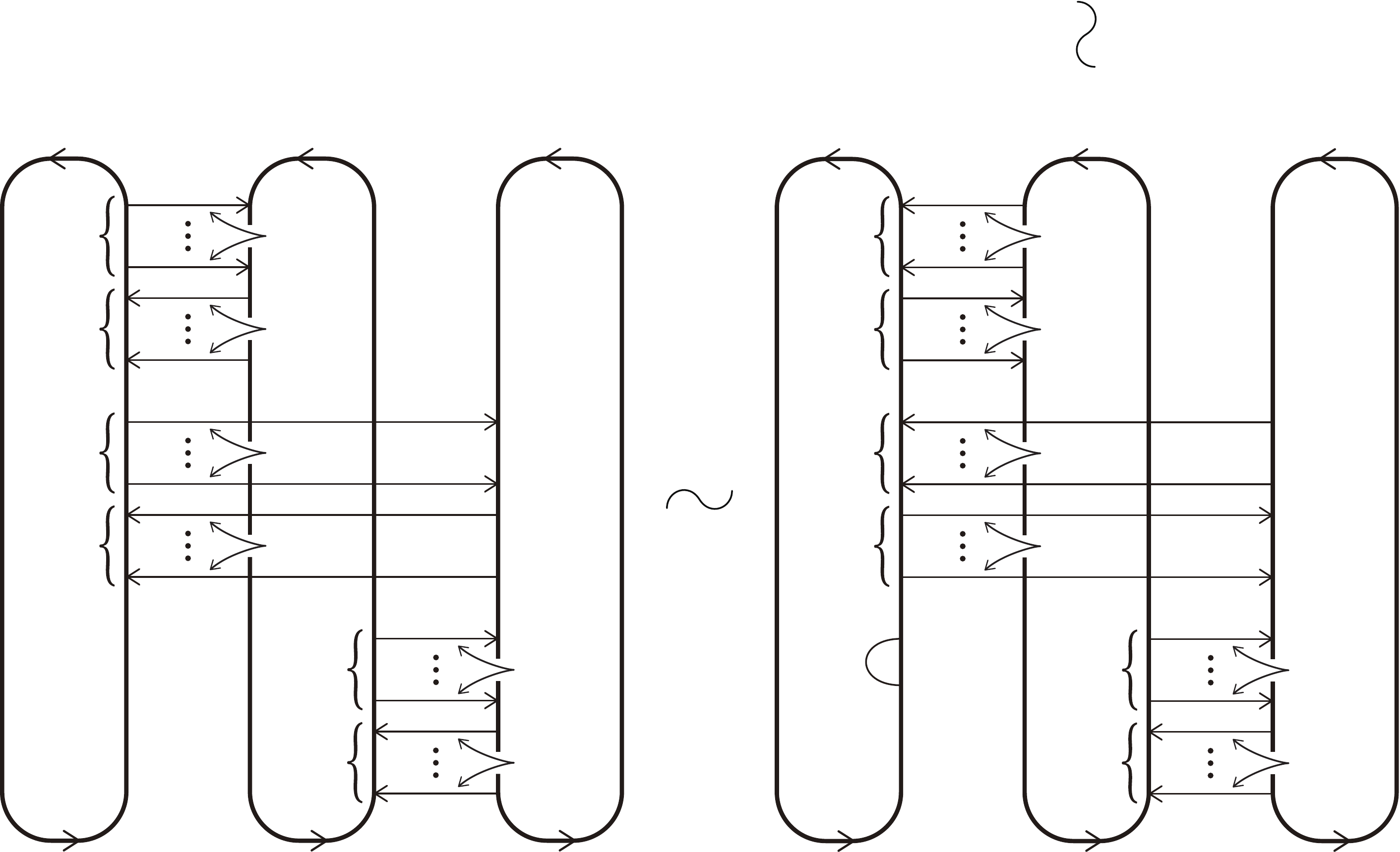}
    \put(-8,-14){(4) $G(-a_{21},-a_{12};-a_{31},-a_{13};a_{23},a_{32})$}
    \put(258.5,-14){(3)}
    \put(164.5,92){R1}
    \put(272,197){Lem \ref{lem-HT}}
    \put(153.25,73){Lem \ref{lem-HT}}
    \put(10,176){$C_{1}$}
    \put(71,176){$C_{2}$}
    \put(131,176){$C_{3}$}
    \put(4,148){$|a_{21}|$}
    \put(4,125){$|a_{12}|$}
    \put(4,95){$|a_{31}|$}
    \put(4,72){$|a_{13}|$}
    \put(68,148){$-\e_{21}$}
    \put(68,125){$-\e_{12}$}
    \put(68,95){$-\e_{31}$}
    \put(68,72){$-\e_{13}$}
    \put(64.5,42){$|a_{23}|$}
    \put(64.5,19){$|a_{32}|$}
    \put(128.5,42){$\e_{23}$}
    \put(128.5,19){$\e_{32}$}
    
    
    \put(199,176){$C_{1}$}
    \put(259,176){$C_{2}$}
    \put(319,176){$C_{3}$}
    \put(202,45){$\gamma$}
    \put(193,148){$|a_{12}|$}
    \put(193,125){$|a_{21}|$}
    \put(193,95){$|a_{13}|$}
    \put(193,72){$|a_{31}|$}
    \put(257,148){$-\e_{12}$}
    \put(257,125){$-\e_{21}$}
    \put(257,95){$-\e_{13}$}
    \put(257,72){$-\e_{31}$}
    \put(253.5,42){$|a_{23}|$}
    \put(253.5,19){$|a_{32}|$}
    \put(317.5,42){$\e_{23}$}
    \put(317.5,19){$\e_{32}$}
  \end{overpic}
\vspace{1em}
\caption{Proof of Lemma~\ref{lem-equiv-Gauss}(i)}
\label{pf-lem-equiv-Gauss}
\end{figure}

\begin{proof}[Proof of Theorem~\ref{th-even}]
$\mathrm{(i)}\Rightarrow\mathrm{(ii)}$: 
This follows from Lemmas~\ref{lem-Lk}, \ref{lem-n-inv}, and \ref{lem-tau-inv} directly. 

\noindent
$\mathrm{(ii)}\Rightarrow\mathrm{(i)}$: 
Let $G$ and $G'$ be Gauss diagrams of $L$ and $L'$, respectively. 
By Proposition~\ref{prop-NF}, we have 
\[
G\sim G(a_{12},a_{21};a_{13},a_{31};a_{23},a_{32})
\]
for some $a_{ij}\in\mathbb{Z}$ and 
\[
G'\sim G(a'_{12},a'_{21};a'_{13},a'_{31};a'_{23},a'_{32})
\]
for some $a'_{ij}\in\mathbb{Z}$. 
Since $\mathrm{Lk}(K_{i},K_{j})-\mathrm{Lk}(K_{j},K_{i})=\mathrm{Lk}(K'_{i},K'_{j})-\mathrm{Lk}(K'_{j},K'_{i})$ and $n(K_{i},K_{j})=n(K'_{i},K'_{j})$, by Lemma~\ref{lem-value} we have $a_{ij}-a_{ji}=a'_{ij}-a'_{ji}$ and $|a_{ij}+a_{ji}|=|a'_{ij}+a'_{ji}|$
$(1\leq i<j\leq3)$. 
Therefore, there are the following cases to consider. 
\begin{enumerate}
\renewcommand{\labelenumi}{(\arabic{enumi})}
\item $a_{12}=a'_{12},a_{21}=a'_{21},a_{13}=a'_{13},a_{31}=a'_{31},a_{23}=a'_{23},a_{32}=a'_{32}$. \label{case1} 
\item $a_{12}=-a'_{21},a_{21}=-a'_{12},a_{13}=-a'_{31},a_{31}=-a'_{13},a_{23}=a'_{23},a_{32}=a'_{32}$. \label{case2} 
\item $a_{12}=-a'_{21},a_{21}=-a'_{12},a_{13}=a'_{13},a_{31}=a'_{31},a_{23}=-a'_{32},a_{32}=-a'_{23}$. \label{case3} 
\item $a_{12}=a'_{12},a_{21}=a'_{21},a_{13}=-a'_{31},a_{31}=-a'_{13},a_{23}=-a'_{32},a_{32}=-a'_{23}$. \label{case4} 
\item $a_{12}=-a'_{21},a_{21}=-a'_{12},a_{13}=a'_{13},a_{31}=a'_{31},a_{23}=a'_{23},a_{32}=a'_{32}$. \label{case5} 
\item $a_{12}=a'_{12},a_{21}=a'_{21},a_{13}=-a'_{31},a_{31}=-a'_{13},a_{23}=a'_{23},a_{32}=a'_{32}$. \label{case6} 
\item $a_{12}=a'_{12},a_{21}=a'_{21},a_{13}=a'_{13},a_{31}=a'_{31},a_{23}=-a'_{32},a_{32}=-a'_{23}$. \label{case7} 
\item $a_{12}=-a'_{21},a_{21}=-a'_{12},a_{13}=-a'_{31},a_{31}=-a'_{13},a_{23}=-a'_{32},a_{32}=-a'_{23}$. \label{case8} 
\end{enumerate}
Since $\tau_{0}(L)-\tau_{1}(L)=\tau_{0}(L')-\tau_{1}(L')$, Lemma~\ref{lem-value} implies that 
\[
(a_{12}+a_{21})(a_{13}+a_{31})(a_{23}+a_{32})
=(a'_{12}+a'_{21})(a'_{13}+a'_{31})(a'_{23}+a'_{32}).
\]
This indicates that Cases~(\ref{case5})--(\ref{case8}) do not occur. 
Hence, it is enough to consider Cases~(\ref{case1})--(\ref{case4}). 

In Case~(\ref{case1}), we have 
\[
G\sim G(a_{12},a_{21};a_{13},a_{31};a_{23},a_{32})
=G(a'_{12},a'_{21};a'_{13},a'_{31};a'_{23},a'_{32})
\sim G'. 
\] 
In Case~(\ref{case2}), it follows from Lemma~\ref{lem-equiv-Gauss}(i) that 
\begin{eqnarray*}
G&\sim& G(a_{12},a_{21};a_{13},a_{31};a_{23},a_{32}) \\
&=&G(-a'_{21},-a'_{12};-a'_{31},-a'_{13};a'_{23},a'_{32}) \\
&\sim& G(a'_{12},a'_{21};a'_{13},a'_{31};a'_{23},a'_{32})
\sim G'. 
\end{eqnarray*}
Cases~(\ref{case3}) and~(\ref{case4}) are shown similarly. 
\end{proof}

We conclude this paper with a result that gives a relation among the classifying invariants $\mathrm{Lk}(K_{i},K_{j})-\mathrm{Lk}(K_{j},K_{i})$, $n(K_{i},K_{j})$, and $\tau_{0}(L)-\tau_{1}(L)$ of Theorem~\ref{th-even}. 

\begin{proposition}\label{prop-relation}
Let $L=K_{1}\cup K_{2}\cup K_{3}$ be a $3$-component even virtual link, and let $x_{ij},y_{ij}$, and $z$ be integers $(1\leq i<j\leq3)$. 
 Assume that $\mathrm{Lk}(K_{i},K_{j})-\mathrm{Lk}(K_{j},K_{i})=x_{ij}$, $n(K_{i},K_{j})=|y_{ij}|$, and $\tau_{0}(L)-\tau_{1}(L)=z$. 
Then we have 
\[
x_{12}\equiv x_{13}\equiv x_{23}\equiv y_{12}\equiv y_{13}\equiv y_{23}\pmod{2}\] 
and 
\[|z|=|y_{12}y_{13}y_{23}|.\] 
\end{proposition}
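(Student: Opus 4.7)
The plan is to reduce the statement to a direct computation on the normal form Gauss diagram produced by Proposition~\ref{prop-NF}, using the formulas of Lemma~\ref{lem-value} together with the parity criterion of Lemma~\ref{lem-iff-even}. Let $G$ be any Gauss diagram of $L$; by Proposition~\ref{prop-NF} we may assume
\[
G\sim G(a_{12},a_{21};a_{13},a_{31};a_{23},a_{32})
\]
for some integers $a_{ij}$. Since $L$ is even, Lemma~\ref{lem-iff-even} tells us that
\[
a_{12}+a_{21}\equiv a_{13}+a_{31}\equiv a_{23}+a_{32}\pmod 2.
\]
This is the one nontrivial ingredient, and it is exactly what forces a common parity among the invariants.

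Next, by Lemma~\ref{lem-value} the hypotheses of the proposition translate into
\[
x_{ij}=a_{ij}-a_{ji},\qquad |y_{ij}|=|a_{ij}+a_{ji}|,\qquad z=(a_{12}+a_{21})(a_{13}+a_{31})(a_{23}+a_{32})
\]
for $1\leq i<j\leq 3$. For each such pair, $x_{ij}$ and $y_{ij}$ differ from $a_{ij}+a_{ji}$ by an even integer (namely $-2a_{ji}$ and an overall sign, respectively), so
\[
x_{ij}\equiv y_{ij}\equiv a_{ij}+a_{ji}\pmod 2.
\]
Combining this with the common parity of the three sums $a_{ij}+a_{ji}$ yields the six-term congruence
\[
x_{12}\equiv x_{13}\equiv x_{23}\equiv y_{12}\equiv y_{13}\equiv y_{23}\pmod 2.
\]

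For the product identity, I would simply take absolute values on both sides of the formula for $z$ and use the identities $|a_{ij}+a_{ji}|=|y_{ij}|$ to conclude
\[
|z|=|a_{12}+a_{21}|\cdot|a_{13}+a_{31}|\cdot|a_{23}+a_{32}|=|y_{12}|\,|y_{13}|\,|y_{23}|=|y_{12}y_{13}y_{23}|.
\]
There is no real obstacle once the normal form is in hand; the only subtlety is making sure to invoke Lemma~\ref{lem-iff-even} to connect the parities across the three pairs of indices, since without the evenness hypothesis on $L$ the parities $a_{ij}+a_{ji}\pmod 2$ could differ from pair to pair and the six-term congruence would fail.
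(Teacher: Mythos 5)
Your proposal is correct and follows essentially the same route as the paper: reduce to the normal form of Proposition~\ref{prop-NF}, read off the three invariants via Lemma~\ref{lem-value}, and use Lemma~\ref{lem-iff-even} for the common parity of the sums $a_{ij}+a_{ji}$. The only cosmetic difference is that the paper also explicitly cites Lemmas~\ref{lem-Lk}, \ref{lem-n-inv}, and \ref{lem-tau-inv} to justify transferring the invariant values from the CF-equivalent normal form back to $L$, a step you use implicitly.
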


\begin{proof}
By Proposition~\ref{prop-NF}, $L$ is CF-equivalent to a 3-component even virtual link represented by $G(a_{12},a_{21};a_{13},a_{31};a_{23},a_{32})$ for some $a_{ij}\in\mathbb{Z}$. 
By Lemmas~\ref{lem-Lk}, \ref{lem-n-inv}, \ref{lem-tau-inv}, and \ref{lem-value}, we have $\mathrm{Lk}(K_{i},K_{j})-\mathrm{Lk}(K_{j},K_{i})=a_{ij}-a_{ji}$, $n(K_{i},K_{j})=|a_{ij}+a_{ji}|$ $(1\leq i<j\leq3)$, and $\tau_{0}(L)-\tau_{1}(L)=(a_{12}+a_{21})(a_{13}+a_{31})(a_{23}+a_{32})$. 
The assumption implies that 
\[
x_{ij}=a_{ij}-a_{ji},\, |y_{ij}|=|a_{ij}+a_{ji}|\, (1\leq i<j\leq3), 
\]
and 
\[
z=(a_{12}+a_{21})(a_{13}+a_{31})(a_{23}+a_{32}).
\] 
Since $L$ is even, we have $a_{12}+a_{21}\equiv a_{13}+a_{31}\equiv a_{23}+a_{32}\pmod{2}$ by Lemma~\ref{lem-iff-even}. 
Therefore, it follows that 
\[
x_{12}\equiv x_{13}\equiv x_{23}\equiv y_{12}\equiv y_{13}\equiv y_{23}\pmod{2}. 
\]
Moreover, since $|y_{ij}|=|a_{ij}+a_{ji}|$ $(1\leq i<j\leq3)$ and $z=(a_{12}+a_{21})(a_{13}+a_{31})(a_{23}+a_{32})$, we have $|z|=|y_{12}y_{13}y_{23}|$. 
\end{proof}



\end{document}